\documentclass[11pt]{article}

\usepackage[letterpaper, top=1in, bottom=1in, left=0.75in, right=0.75in]{geometry}
\usepackage{amsfonts, amsmath, amstext, amssymb, amsthm}
\usepackage{graphicx,subfig,placeins}
\usepackage{enumerate}
\usepackage{color}
\usepackage{cite}
\usepackage{acronym}
\usepackage{accents}

\newtheorem{lemma}{Lemma}
\newtheorem{theorem}{Theorem}
\newtheorem{corollary}{Corollary}
\newtheorem{proposition}{Proposition}
\theoremstyle{definition}
\newtheorem{assumption}{Assumption}

\newcommand{\reals}{\mathbb{R}}
\newcommand{\naturals}{\mathbb{N}}
\newcommand{\K}{\mathcal{K}}
\newcommand{\KL}{\mathcal{KL}}
\newcommand{\cost}{\mathcal{J}}
\newcommand{\trans}{{\top}}

\newcommand{\ropt}{\bar{r}^\star}

\newcommand{\trace}{\mathrm{Tr}}
\newcommand{\ubar}[1]{\underaccent{\bar}{#1}}

\acrodef{esc}[{\sc esc}]{extremum-seeking control}
\acrodef{iss}[{\sc iss}]{input-to-state stable}
\acrodef{pe}[{\sc pe}]{persistently exciting}
\acrodef{as}[{\sc as}]{asymptotically stable}
\acrodef{rls}[{\sc rls}]{recursive least-squares}
\acrodef{bls}[{\sc bls}]{batch least-squares}
\acrodef{pi-esc}[{\sc pi-esc}]{proportional-integral extremum-seeking controller}

\title{\small \textcolor{blue}{This work has been submitted to the IEEE for possible publication. Copyright may be transferred without notice, after which this version may no longer be accessible.} \\ \bigskip \huge Extremum Seeking Control with an Adaptive Gain Based On Gradient Estimation Error}
\author{Claus Danielson\thanks{Department of Mechanical Engineering, University of New Mexico.},  
Scott A. Bortoff, and Ankush Chakrabarty\thanks{Mitsubishi Electric Research Laboratories (MERL).  Corresponding author. Email: achakrabarty@ieee.org. Phone:
+1 (617) 758-6175.}
}
\date{}

\begin{document}

\maketitle

\begin{abstract}
This paper presents an \ac{esc} algorithm with an adaptive step-size that adjusts the aggressiveness of the controller based on the quality of the gradient estimate. The adaptive step-size ensures that the integral-action produced by the gradient descent does not destabilize the closed-loop system. To quantify the quality of the gradient estimate, we present a \ac{bls} estimator with a novel weighting and show that it produces bounded estimation errors, where the uncertainty is due to the curvature of the unknown cost function. The adaptive step-size then maximizes the decrease of the combined plant and controller Lyapunov function for the worst-case estimation error. We prove that our \ac{esc} controller is input-to-state stable with respect to the dither signal. Finally, we demonstrate our \ac{esc} controller through five numerical examples; one illustrative, one practical, and three benchmarks. 
\end{abstract}

\section{Introduction}
\acresetall

Extremum seeking control is a century-old~\cite{LeBlanc1922} form of model-free adaptive control for the real-time optimization of  dynamic systems. The objective of \ac{esc} is to drive the plant to an equilibrium that optimizes an unknown cost function. In this paper, we extend the \ac{pi-esc} from~\cite{ATTA2019147, Guay2015, guay2015time} using an adaptive step-size that adjusts the aggressiveness of the controller based on the quality of the gradient estimate. 

Most \ac{esc} controllers can be interpreted as gradient descent algorithms, wherein the controller follows a descent direction to the optimal. From the perspective of dynamic systems, gradient descent is integral-action. For instance, the standard gradient descent~\cite{ATTA2019147, Guay2015, guay2015time, Danielson2006, Biyik2007, Chakrabarty2020} wherein the updated set-point is the previous set-point minus a step-size times the gradient, has the dynamics of a discrete-time integrator. In classical \ac{esc}, the set-point is the continuous-time integral of the estimated gradients~\cite{Ariyur2003, Benosman2017}. Whether in discrete-time or continuous-time, it is a fundamental result from control-theory that integral-action can lead to instability. This issue is further complicated for non-linear systems. \ac{esc} controllers employ a variety of strategies to preserve stability. For instance, the inspiration for this paper~\cite{ATTA2019147, Guay2015, guay2015time} used a \ac{pi-esc} controller to preserved stability. In this paper, we introduce a integral gain matrix that ensured stability under the idealistic condition where the gradient is perfectly estimated. This integral gain represents the most aggressive step-size for the gradient descent that will not destabilize the plant. Our adaptive step-size attenuates this idealistic integral gain to preserve stability when the gradient estimate is imperfect. 

Typically, strategies for promoting stability requires slowing the integral-action of the gradient descent, which can result in a slower convergence to the optimal. To accelerate convergence, other mechanisms are often added to the \ac{esc} algorithm. For example, dither adaptation has been explored in~\cite{ATTA2019147,yin2019optimizing}, where the dither signal is made sufficiently small near the optimal solution so as to constrict the size of the uncertainty ball around the equilibrium state. Using the magnitude of the gradient for dither amplitude adaptation has been proposed in~\cite{wang2016stability}, and extended to dither adaptation using a super-twisting algorithm and higher-order sliding modes in~\cite{he2019twisting}. The problem of removing excitation signals in a phasor-based ESC that incorporates {\sc pi} control in the feedback path is discussed in~\cite{ATTA2019147}. The authors in~\cite{haring2016}  proposed a mechanism for reducing the dither amplitude in a traditional perturbation-based ESC strategy and provided a stability analysis. Methods for fast ESC convergence with high-frequency dither signals for systems with unknown dynamics is proposed in~\cite{Scheinker2013}, and with unknown Hessians in~\cite{Ghaffari2014}. Dither-free methods have also been explored in~\cite{Hunnekens2014}, and event-triggered mechanisms for fast convergence in~\cite{Poveda2017}. In contrast, this paper focuses on accelerating convergence by using an adaptive step-size in the integral-action of the gradient descent.  While dither perturbation has gained widespread attention, directly adapting the \ac{esc} controller gain is relatively uncommon, with a few exceptions, namely~\cite{salsbury2020self,Grushkovskaya2018,Suttner2019}. 

One of the main challenges of \ac{esc} is that the gradient of the unknown cost function must be estimated from data gathered while the system is in operation. Misestimating the gradient can exacerbate the stability issues introduced by the integral-action, especially when the gradient is over-estimated. This issue can be addressed by analyzing the \ac{esc} controller and gradient estimator in a common framework. For instance, \ac{esc} controllers often use an \ac{rls} estimator to estimate the gradient of the cost function~\cite{ATTA2019147, Guay2015, guay2015time, Danielson2006, Chakrabarty2020}. This approach is attractive since the gradient estimator has state dynamics that can be analyzed in a common Lyapunov framework with the plant and \ac{esc} controller dynamics. The disadvantage of this approach is that it often results in conservatively slowing integral-action to allow the estimator to converge. In contrast, our \ac{esc} controller employs a \ac{bls} estimator with a novel weighting. The \ac{bls} estimator can be viewed as an operator that maps batches of collected data to gradient estimates. Thus, we do not need to consider its convergence rate in our analysis. Instead, we show that the novel weightings used in our \ac{bls} estimator produce bounded estimation errors. The adaptive step-size uses these bounds to maximize the decrease of the joint plant and controller Lyapunov function for the worst-case gradient estimation error. The potential advantage of this approach is that we can use more a aggressive integral-action on average without risking instability. This advantage is empirically demonstrated through our benchmark simulations. Another advantage of this approach is that it will facilitate future work based on more general gradient estimators, such as moving horizon estimators or set-based estimators.

For our \ac{esc} algorithm, persistently exciting data is necessary, but not sufficient, to accurately estimate the gradient of the cost. For accurate estimates, the data gathered must also be sufficiently close to equilibrium. To quantify the distance from equilibrium, we assume that our plant is instrumented to provide addition measurements beyond the cost, which is the only measurement used in most \ac{esc} problem formulations. Furthermore, we assume that the cost is a static function of these measurements. This is an admittedly strong assumption for \ac{esc}, but one that is consistent with many industrial systems which are heavily instrumented. Exploiting these additional sensor measurements to improve the convergence is a shrewd strategy. We show that accurate estimates of the gradient requires persistently exciting and sufficiently small perturbations of the input. Thus, the dither based acceleration methods described above can potentially be combined with our adaptive gain to further improve convergence. 

The remainder of this paper is organized as follows. In Section~\ref{sec:problem}, we define our \ac{esc} problem formulation. In Section~\ref{sec:controller}, we present our \ac{esc} controller and prove its convergence. Finally, in Section~\ref{sec:examples} we demonstrate our \ac{esc} controller on five numerical example; one illustrative, one practical, and three benchmark. 

\subsubsection*{Notation:}
For a vector $v \in \reals^n$ and square matrix $M \in \reals^{n \times n}$, $\| v \|_M = \sqrt{v^\trans M v}$ is the weighted $2$-norm where the subscript is omitted for the identity matrix $\| v \| = \sqrt{v^\trans v}$. For a square matrix $M \in \reals^{n \times n}$, $\ubar{\lambda}(M)$ and $\bar{\lambda}(M)$ denote its smallest and largest eigenvalues respectively and $\| M \| = \sup\{ \| Mx \| : \|x \| \leq 1\}$ is the induced $2$-norm. A function $\alpha : [0,\infty) \rightarrow [0,\infty)$ is class-$\K$ if $\alpha(0) = 0$ and it is strictly increasing. A function $\alpha : [0,\infty) \rightarrow [0,\infty)$ is class-$\K_\infty$ if it is class-$\mathcal{K}$ and $\lim_{a\rightarrow \infty} \alpha(a) = \infty$. A function $\beta:[0,\infty)^2 \rightarrow [0,\infty)$ is class-$\mathcal{KL}$ if $\beta(\cdot,t)$ is class-$\mathcal{K}$ $\forall t>0$ and $\beta(r,\cdot)$ is continuous and strictly decreasing $\forall r>0$. A system $x_{t+1} = f(x_t,d_t)$ is input-to-state stable if $\| x_t \| \leq \beta(\| x_0\|,t) + \gamma(\sup_t \|d_t\|)$ where $\beta \in \KL$ and $\gamma \in \K$. A function $f$ is $\mathcal{C}^n$ if the derivatives derivatives $f^{(1)},\dots,f^{(n)}$ exist and are continuous. 

\section{ESC Problem Statement}
\label{sec:problem}

Consider the following discrete-time nonlinear system
\begin{subequations}
\label{eq:plant}
\begin{align}
x_{t+1} &= f\big(x_t,u_t\big) \label{eq:plant-dynamics} \\
y_t &= g\big(x_t,u_t\big) \label{eq:plant-output}
\end{align}
\end{subequations}
where $x_t \in \reals^{n_x}$ is the state, $u_t \in \reals^{n_r}$ is the control input, and $y_t \in \reals^{n_y}$ are measured output other than cost. We make the following assumptions about the plant~\eqref{eq:plant}.

\begin{assumption}[Plant]
\label{assume:plant}
$\null$
\begin{enumerate}[(a)]
\item\label{assume:plant-iss} The plant is  controllable, observable, and Lipschitz continuous. Each input $u$ corresponds a unique \ac{iss} equilibrium state $\pi(u)$ where $\pi$ is Lipschitz continuous. 
\item\label{assume:plant-tracking} The output $y_t$ tracks $y_t \rightarrow \bar{r}$ constant inputs $u_t = \bar{r}$.
\end{enumerate}
\end{assumption}

Assumption~\ref{assume:plant} is admittedly a strong assumption, but one that is consistent with many industrial applications where \ac{esc} is applied to a closed-loop system with a well-designed controller and heavy instrumentation. When satisfied, this assumption can be used to improve the performance of \ac{esc} controllers. Assumption~\ref{assume:plant}\ref{assume:plant-iss} means that the closed-loop system~\eqref{eq:plant} is robustly stable. Thus, bounded perturbation of the input $u_t$ cause bounded perturbation of the output $y_t$, allowing for safe exploration without risking instability. Assumption~\ref{assume:plant}\ref{assume:plant-iss} is consistent with the assumptions made in other \ac{esc} literature e.g.~\cite{ATTA2019147, Guay2015, guay2015time}.

Assumption~\ref{assume:plant}\ref{assume:plant-tracking} amounts to assuming that the steady-state map of the system is identity $g(\pi(u),u)= I$. This assumption is made for notational simplicity. The steady-state cost $\ell(u) = \cost(g(\pi(u),u))$ with respect to the input $u$ depends on the steady-state map $g(\pi(\cdot),\cdot)$. Without loss of generality, we can transform the inputs $u= \pi(r)$ to produce a plant~\eqref{eq:plant} whose steady-state map $g(g^{-1}(\cdot)) = I$ is identity where $g$ is invertible since both $h$ and $\pi$ are invertible. This simplifies the notation (but not the analysis) since $\nabla_r \cost(g(\pi(r))) = \nabla_y \cost$ instead of $\nabla_u \ell = \nabla_y \cost \nabla_u h + \nabla_y \cost \nabla_x h \nabla \pi$.

The \textbf{objective} of the \ac{esc} is to  find a operating condition $u_t = \ropt$ such that the plant~\eqref{eq:plant} optimizes an unknown steady-state cost $\cost(y)$. The optimal equilibrium is defined as
\begin{subequations}
\label{eq:equilibrium}
\begin{alignat}{3}
(\bar{y}^\star, \bar{u}^\star, \bar{x}^\star) =~ &&
\arg\min &~ \cost(\bar{y})\\
&&\mathrm{s.t.} &~ \bar{x} = f(\bar{x},\bar{u}) \\
&&&~ \bar{y} = g(\bar{x},\bar{u} ).
\end{alignat}
\end{subequations}
We make the following assumptions about the cost $\cost \in \mathcal{C}^2$.

\begin{assumption}[Cost]
\label{assume:cost}
$\null$
\begin{enumerate}[(a)]
\item\label{assume:cost-curvature} The cost $\cost \in \mathcal{C}^2$ has bounded curvature $\ubar{H} \preceq \nabla^2 \cost \preceq \bar{H}$. 
\item\label{assume:cost-lyapunov} The cost $\cost \in \mathcal{C}^2$ is bounded by class-$\K_\infty$ functions $\kappa_1( \| y- y^\star \|) \leq \cost(y) \leq \kappa_2( \| y- y^\star \|)$ and its gradient $\nabla \cost$ of the cost satisfies $\| \nabla \cost(y) \| \geq \kappa_3( \| y- y^\star \|)$ for some class-$\K_\infty$ function $\kappa_3$. 
\end{enumerate}
\end{assumption}

Assumption~\ref{assume:cost}\ref{assume:cost-curvature} will be used to bound the estimation errors of our gradient estimator. This assumption holds if and only if the cost gradient $\nabla \cost \in \mathcal{C}^1$ Lipschitz continuous i.e. $\ubar{H} = -hI$ and $\bar{H} = hI$ implies $\| \nabla \cost(y_1) - \nabla \cost(y_2) \| \leq h \| y_1 - y_2 \|$. However, our \ac{esc} algorithm can exploit more nuanced curvature bounds $\ubar{H},\bar{H}$, if available, to improve the convergence rate. Note that the bounds $\ubar{H}$ and $\bar{H}$ are not required to be positive definite matrices. Thus, we are not assuming that the cost $\cost$ is convex. Indeed, for two of our numerical examples, the cost will be non-convex. 

Assumption~\ref{assume:cost}\ref{assume:cost-lyapunov} means that driving the cost gradient to zero $\nabla \cost \rightarrow 0$ results in the output converging to the optimal $y_t \rightarrow y^\star$. This assumption will be used to prove the stability of the optimal equilibrium~\eqref{eq:equilibrium}. If the cost $\cost$ is convex (i.e. $0 \preceq \ubar{H} \preceq \nabla^2 \cost$) then Assumption~\ref{assume:cost}\ref{assume:cost-curvature} implies that Assumption~\ref{assume:cost}\ref{assume:cost-lyapunov} holds locally. However, this assumption can hold for non-convex costs $\cost$, like those we will consider in our numerical examples. 

\section{Adaptive Gradient ESC Algorithm}
\label{sec:controller}
\acresetall

Our \ac{esc} is given by 
\begin{subequations}
\label{eq:controller}
\begin{align}
r_{t+1} &= r_t + \begin{cases} 
-\alpha_t K \hat{\theta}_t & \text{ if } \alpha_t \geq \underline{\alpha} \\
0 & \text{ otherwise} 
\end{cases} \\
u_{t} &= r_t + d_t 
\end{align}
\end{subequations}
where the state $r_t$ of the controller is the current estimate of the optimal reference $\ropt$ and the input $u_t$ is the reference $r_t$ plus a dither signal $d_t$. The step-size $\alpha_t$ and controller gain $K$ will be described below. For $\alpha_t \geq \ubar{\alpha}$, the \ac{esc} controller~\eqref{eq:controller} is a discrete-time integral controller $r_{t+1} = r_{t} - \alpha_t K \hat{\theta}_t$. 

The gradient $\theta_t = \nabla \cost (r_t)$ of the cost function $\cost(r_t)$ at the current reference set-point $r_t$ is estimated by the following finite-horizon \ac{bls} estimator
\begin{subequations}
\label{eq:estimator}
\begin{align}
\Lambda_t^{-1} &= \frac{1}{N}\sum_{k=t-1}^{t-N}  w_k \Delta y_k \Delta y_k^\trans 
\label{eq:estimator-information} \\
\hat{\theta}_t &= \frac{\Lambda_t}{N} \sum_{k=t-1}^{t-N} 
w_k   \Delta y_k  \Big(\Delta \cost_k \!+\! \Delta y_k^\trans \hat{H} \big(e_t \!-\! \tfrac{1}{2} \Delta y_k\big)\Big)
\label{eq:estimator-mean}
\end{align}
where $\Delta y_k = y_t - y_k$ and $\Delta \cost_k = \cost(y_t) - \cost(y_k)$ are changes in the measurements of the output and cost, respectively, and the batch horizon $N \geq n_y$ is at least $n_y$. The existence of the inverse $\Lambda_t$ of the information matrix~\eqref{eq:estimator-information} requires that the output $y_t$ of the plant~\eqref{eq:plant} is persistently exciting, which is achieved using the dither $d_t$ in the controller~\eqref{eq:controller}. The correction term $\Delta y_k^\trans \hat{H} ( e_t + \tfrac{1}{2}\Delta y_k)$ compensates for the tracking error $e_t = y_t - r_t$ and transients $\Delta y_k \neq 0$ where $\hat{H} = \tfrac{1}{2} (\bar{H} + \ubar{H})$ is the \textit{median} curvature $\nabla^2 \cost$ of the cost $\cost$. Without the measurements of the outputs $y_t$, the \ac{esc} controller~\eqref{eq:controller} would need to be detuned to conservatively allow the plant~\eqref{eq:plant} to settle near the equilibrium $\pi(r_t)$. If only a Lipschitz bound $h$ on gradient $\nabla \cost$ is known, then the correction term disappears $\Delta y_k^\trans \hat{H} ( e_t + \tfrac{1}{2}\Delta y_k) = 0$ since $\hat{H} = 0$ when $\ubar{H} = -hI$ and $\bar{H} = hI$. For a convex cost with known Lipschitz bound $h$, the correction term is $\tfrac{h}{2}\Delta y_k^\trans ( e_t + \tfrac{1}{2}\Delta y_k)$. The \textbf{novel} weighting $w_k$ is given by
\begin{align}
\label{eq:weighting}
w_k = \frac{1}{\tfrac{1}{2}\| \Delta y_k \| \| \Delta y_k \|_{\tilde{H}}  \big( \| e_t \|_{\tilde{H}} + \tfrac{1}{2} \| \Delta y_k \|_{\tilde{H}} \big)}
\end{align}
\end{subequations}
where $\tilde{H} = \bar{H} - \ubar{H}$ bounds the range of curvature $\nabla^2 \cost$ of the unknown cost $\cost$. 
When the tracking errors $\|e_t\| \gg 1$ and output transients $\| \Delta y_k \|\gg 1$ are large, the weighting is small $w_k \ll 1$ indicating that the data-point $\{\Delta \cost_k, \Delta y_k\}$ will not provide reliable information about the steady-state gradient $\nabla \cost(r_t)$. When only a Lipschitz bound $h$ on the gradient $\nabla \cost$ is known, the weightings~\eqref{eq:weighting} simplify
\begin{align*}
w_k = \frac{1}{h^2 \| \Delta y_k \|^2 \big( 2 \| e_t \| + \| \Delta y_k \| \big)}.
\end{align*}
We will show that this weighting guarantees that the gradient estimation errors $\tilde{\theta}_t = \hat{\theta}_t - \nabla \cost(r_t)$ are bounded. 

Our \textbf{main contribution} is the adaptive step-size
\begin{align}
\label{eq:step-size}
\alpha_t = \max\left\{0, 1 - \frac{\big\| \Lambda_t^{\frac{1}{2}} K \hat{\theta}_t \big\|}{\| \hat{\theta}_t \|_K^2} \right\}
\end{align}
which dictates both the mode and the aggressiveness of the controller~\eqref{eq:controller} based on the quality of the gradient estimate $\hat{\theta}_t$. If the step-size~\eqref{eq:step-size} is small $\alpha_t < \underline{\alpha}$, then the controller~\eqref{eq:controller} enters the so called \textit{exploration mode} where the state $r_t$ of the controller~\eqref{eq:controller} remains constant while the dither signal $d_t$ probes the plant~\eqref{eq:plant} to improve the gradient estimate. If $\alpha_t \geq \ubar{\alpha}$ then the controller~\eqref{eq:controller} enters the so called \textit{exploitation mode} where it descends the estimated gradient $\hat{\theta}_t$ with $K = K^\trans \succ 0$. Furthermore, the aggressiveness of this descent is dictated by the step-size~\eqref{eq:step-size}. The $\max$ operator ensures that the step-size is non-negative and well-defined. If $\| \hat{\theta}_t \|_K^2 = 0$, then either we have perfectly misestimated the gradient $\tilde{\theta} = \nabla \cost$ or perfectly estimated a zero gradient $\hat{\theta} = \nabla \cost = 0$ (i.e. we are at optimal). In either case, the step-size~\eqref{eq:step-size} is zero since the controller should not step.  

To better understand the intuition behind the adaptive step-size~\eqref{eq:step-size}, consider the case where the controller gain $K$ and estimator covariance $\Lambda_t$ are balanced i.e. $K \approx k I$ and $\Lambda \approx \sigma^2 I$. Then, we can approximate the adaptive step-size~\eqref{eq:step-size} as
\begin{align*}
\alpha_t = \max\left\{ 0, 1 - \frac{\| \Lambda_t^{\frac{1}{2}} \| \| K \hat{\theta}_t\|}{\|  \hat{\theta} \| \| K \hat{\theta}_t \|} \right\} \approx  1 - \frac{\sigma}{\mu} .
\end{align*} 
where $\sigma / \mu$ is the \textit{noise-to-signal ratio} of the gradient estimate and $\mu = \| \hat{\theta} \|_K^2$ is the size of the descent direction. If the \textit{noise-to-signal ratio} is small $\sigma / \mu \ll 1$, then $\alpha_t \approx 1$, allowing the \ac{esc} controller~\eqref{eq:controller} to aggressively exploit the high-quality gradient estimate $\hat{\theta}_t$. Conversely, if the \textit{noise-to-signal ratio} is large $\sigma / \mu \approx 1$ then the reduced step-size $\alpha_t \ll 1$ slows the gradient descent. Thus, the adaptive step-size~\eqref{eq:step-size} provides a reactive separation of time-scales between the controller~\eqref{eq:controller} and estimator~\eqref{eq:estimator}. 

The positive definite controller gain $K = K^\trans \succ 0$ of the \ac{esc} controller~\eqref{eq:controller} must satisfy the matrix inequality
\begin{align}
\label{eq:esc-gain}
K -  K \big( \bar{H} + \gamma I \big) K \succeq 0
\end{align}
for some scalar $\gamma$. In Corollary~\ref{cor:linear-tuning} we will describe how to tuning of the controller gain~\eqref{eq:esc-gain} for a linear plant~\eqref{eq:plant}. If the plant~\eqref{eq:plant} has trivial dynamics, then the gain~\eqref{eq:esc-gain} is $K = \frac{1}{2} \bar{H}$, which is the ideal choice for the (non-dynamic) optimization problem~\eqref{eq:equilibrium}. For a dynamic plant, the gain~\eqref{eq:esc-gain} incorporates information about both the plant~\eqref{eq:plant} and optimization problem~\eqref{eq:equilibrium} to improve convergence and prevent instabilty. 

The following theorem proves that the \ac{esc} controller~\eqref{eq:controller} converges to the reference $\ropt$ that drives the plant~\eqref{eq:plant} to a neighborhood of the optimal equilibrium~\eqref{eq:equilibrium}. 

\begin{theorem}
\label{thm:main}
Let Assumptions~\ref{assume:plant} and~\ref{assume:cost} hold. Let the dither $d_t$ be persistently exciting and bounded $\| d_t \| \leq \delta$. Let $K$ satisfy~\eqref{eq:esc-gain}. Then the optimal equilibrium~\eqref{eq:equilibrium} is input-to-state stable for the closed-loop system~\eqref{eq:plant} and~\eqref{eq:controller}-\eqref{eq:step-size}. 
\end{theorem}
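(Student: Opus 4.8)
The plan is to exhibit a composite Lyapunov/ISS argument for the cascade formed by the plant~\eqref{eq:plant} and the controller~\eqref{eq:controller}, using the steady-state cost as the controller's Lyapunov function and combining it with the plant's ISS estimate from Assumption~\ref{assume:plant}\ref{assume:plant-iss}. Since Assumption~\ref{assume:plant}\ref{assume:plant-tracking} forces $y_t\to r_t$ for frozen $r_t$, the controller state $r_t$ is the effective optimization variable and $\cost(r_t)$ is the natural descent function, while the dither $d_t$ and the transient tracking error $e_t=y_t-r_t$ enter as disturbances to be dominated. I would organize the analysis around the threshold $\ubar\alpha$: in the exploitation regime $\alpha_t\ge\ubar\alpha$ I show $\cost(r_t)$ decreases strictly, and in the exploration regime $\alpha_t<\ubar\alpha$ I show $r_t$ is held fixed while persistency of excitation refreshes the data.

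The technical heart is the gradient-error lemma that the weighting~\eqref{eq:weighting} is engineered to produce. Expanding the measured increment by Taylor's theorem with integral remainder and using $\cost\in\mathcal C^2$, the bracketed data term in~\eqref{eq:estimator-mean} equals $\theta_t^\trans\Delta y_k$ plus a residual $\rho_k$ that, once the median-curvature correction $\hat H=\tfrac12(\bar H+\ubar H)$ cancels the nominal second-order contribution, depends only on the Hessian deviations $H^{(1)}_k-\hat H$ and $H^{(2)}_k-\hat H$, where $H^{(1)}_k,H^{(2)}_k$ are mean-value Hessians obeying $\ubar H\preceq H^{(i)}_k\preceq\bar H$. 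These deviations satisfy $-\tfrac12\tilde H\preceq H^{(i)}_k-\hat H\preceq\tfrac12\tilde H$ with $\tilde H=\bar H-\ubar H\succeq0$, so the elementary estimate $|a^\trans M b|\le\tfrac12\|a\|_{\tilde H}\|b\|_{\tilde H}$ (valid whenever $-\tfrac12\tilde H\preceq M\preceq\tfrac12\tilde H$, by factoring $M$ through $\tilde H^{1/2}$) applied to the cross term and the quadratic term yields exactly $|\rho_k|\le\tfrac12\|\Delta y_k\|_{\tilde H}\big(\|e_t\|_{\tilde H}+\tfrac12\|\Delta y_k\|_{\tilde H}\big)$. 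Comparing with~\eqref{eq:weighting} gives the per-sample normalization $w_k\|\Delta y_k\|\,|\rho_k|\le1$.

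From here the estimation error is $\tilde\theta_t=\Lambda_t v$ with $v=\tfrac1N\sum_k w_k\rho_k\Delta y_k$, because the information matrix~\eqref{eq:estimator-information} reproduces the linear part $\theta_t^\trans\Delta y_k$ exactly; the least-squares (orthogonal-projection) structure of~\eqref{eq:estimator} then gives $\|\tilde\theta_t\|_{\Lambda_t^{-1}}^2=v^\trans\Lambda_t v\le\tfrac1N\sum_k w_k\rho_k^2$, a quantity that the normalization above drives below $1$ once the tracking error $e_t$ and transients $\Delta y_k$ are small enough---precisely the ``sufficiently close to equilibrium'' condition. Granting $\|\tilde\theta_t\|_{\Lambda_t^{-1}}\le1$, a single Cauchy--Schwarz step gives the decisive inequality $\tilde\theta_t^\trans K\hat\theta_t=(\Lambda_t^{-1/2}\tilde\theta_t)^\trans(\Lambda_t^{1/2}K\hat\theta_t)\le\|\Lambda_t^{1/2}K\hat\theta_t\|$, so that $\theta_t^\trans K\hat\theta_t=\|\hat\theta_t\|_K^2-\tilde\theta_t^\trans K\hat\theta_t\ge\alpha_t\|\hat\theta_t\|_K^2$ with $\alpha_t$ exactly the step-size~\eqref{eq:step-size}. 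Substituting $r_{t+1}-r_t=-\alpha_t K\hat\theta_t$ into a second-order expansion of $\cost$, bounding the Hessian by $\bar H$, and invoking the gain condition~\eqref{eq:esc-gain} to bound the curvature term by $\hat\theta_t^\trans K\bar H K\hat\theta_t\le\|\hat\theta_t\|_K^2$, these two estimates combine to the strict decrease $\cost(r_{t+1})-\cost(r_t)\le-\tfrac12\alpha_t^2\|\hat\theta_t\|_K^2\le0$ in the exploitation regime.

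To close the ISS argument I would convert this decrease into a bound on $\|r_t-\ropt\|$ through the $\K_\infty$ sandwich and gradient lower bound of Assumption~\ref{assume:cost}\ref{assume:cost-lyapunov}, which make $\cost(r_t)$ a genuine ISS-Lyapunov function: away from $\ropt$ the lower bound $\|\nabla\cost(r_t)\|\ge\kappa_3(\|r_t-\ropt\|)>0$ keeps $\|\hat\theta_t\|$, and hence the decrease, bounded below, so the exploration regime cannot persist and a strict per-step decrease resumes. The dither bound $\|d_t\|\le\delta$ and the plant's ISS estimate control $e_t$ and the transients $\Delta y_k$, both guaranteeing the estimation-error condition in a neighborhood and fixing the floor of the residual set into which $r_t$ is driven; persistency of excitation keeps $\Lambda_t$ uniformly bounded so that $\hat\theta_t$ is well defined. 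Assembling the decrease with the $\K_\infty$ bounds then yields a class-$\KL$ transient term and a class-$\K$ gain $\gamma(\delta)$ from the dither, which is the claimed ISS property of the optimal equilibrium~\eqref{eq:equilibrium}. I expect the main obstacle to be exactly this last coupling: the estimator's reliability needs $e_t$ small, $e_t$ is governed by the plant's ISS response to the moving reference and the dither, and that motion is in turn set by the estimate---so the delicate step is to propagate $\delta$ through the plant bound into the data $\{\Delta\cost_k,\Delta y_k\}$ and to rule out the controller stalling in exploration far from $\ropt$.
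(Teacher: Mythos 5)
Your architecture coincides with the paper's: the same composite Lyapunov function $V_x(\tilde{x})+\cost(r)-\cost(\ropt)$, the same two-regime analysis around the threshold $\ubar{\alpha}$, the same worst-case (game-theoretic) treatment of the gradient error over an ellipsoid shaped by $\Lambda_t$ leading to the step-size~\eqref{eq:step-size}, and the same plan of ruling out permanent exploration via persistency of excitation and the plant's \ac{iss} response. Your derivation of the per-sample residual bound $|\rho_k|\leq\tfrac{1}{2}\|\Delta y_k\|_{\tilde{H}}\big(\|e_t\|_{\tilde{H}}+\tfrac{1}{2}\|\Delta y_k\|_{\tilde{H}}\big)$ via the elementary estimate $|a^\trans M b|\leq\tfrac{1}{2}\|a\|_{\tilde{H}}\|b\|_{\tilde{H}}$ for $-\tfrac{1}{2}\tilde{H}\preceq M\preceq\tfrac{1}{2}\tilde{H}$ is correct and is a cleaner route than the paper's Lemma~\ref{lemma:estimation-error}, which invokes a closed-form solution of a semidefinite program to arrive at the same interval.

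The gap is in how you convert that per-sample bound into the error set that licenses~\eqref{eq:step-size}. The paper writes $\rho_k=\nu_k/(w_k\|\Delta y_k\|)$ with $|\nu_k|\leq 1$, so the weighted normal equations give $\Lambda_t^{-1}\tilde{\theta}_t=\tfrac{1}{N}\sum_k\nu_k\,\Delta y_k/\|\Delta y_k\|$, an average of sub-unit vectors; hence $\|\Lambda_t^{-1}\tilde{\theta}_t\|\leq 1$ holds for \emph{every} batch, however non-local, by construction of the weighting~\eqref{eq:weighting}. Your projection inequality $\|\tilde{\theta}_t\|_{\Lambda_t^{-1}}^2\leq\tfrac{1}{N}\sum_k w_k\rho_k^2$ is also valid, but its right-hand side is bounded only by $\tfrac{1}{N}\sum_k \tfrac{1}{2}\|\Delta y_k\|_{\tilde{H}}\big(\|e_t\|_{\tilde{H}}+\tfrac{1}{2}\|\Delta y_k\|_{\tilde{H}}\big)/\|\Delta y_k\|$, which is $\leq 1$ only when the data is sufficiently local. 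That conditionality breaks the logic of the exploitation mode: the controller steps whenever $\alpha_t\geq\ubar{\alpha}$, a test computed from $\Lambda_t$ and $\hat{\theta}_t$ alone, so your Lyapunov decrease $\Delta V_r\leq-\tfrac{1}{2}\alpha_t^2\|\hat{\theta}_t\|_K^2$ is certified only if the true error actually lies in the assumed ellipsoid at that instant --- which your derivation does not guarantee at every step where the controller acts. You need the membership to hold unconditionally (as in Lemma~\ref{lemma:estimation-error}) so that $\alpha_t\geq\ubar{\alpha}$ is itself a sufficient certificate; the smallness of $e_t$ and $\Delta y_k$ should only be needed to show the controller eventually \emph{leaves} exploration (the paper's Lemma~\ref{lemma:time-bound}), not to validate the steps it takes. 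Two further omissions: (i) decreasing $\cost(r_t)$ alone is not enough, because moving $r_t$ moves the plant's target equilibrium $\pi(r_t)$ and injects a term of order $\|\Delta r\|^2=\alpha_t^2\hat{\theta}^\trans K^2\hat{\theta}$ into $\Delta V_x$ --- this is precisely why~\eqref{eq:esc-gain} carries the extra $\gamma I$, and your use of~\eqref{eq:esc-gain} only against $\bar{H}$ drops that coupling; (ii) the finite-time exit from exploration, which you correctly identify as the crux, is a separate argument (\ac{iss} of the frozen-reference plant drives $e_t$ and $\Delta y_k$ below an explicit threshold proportional to $\|\theta\|_K$, after which \ac{pe} makes $\Lambda_t^{-1}$ large enough that $\tilde{\Theta}_t^{\text{\sc bls}}\subseteq\tilde{\Theta}_t^{\max}$); without it the switched-system assembly does not deliver \ac{iss}.
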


Theorem~\ref{thm:main} says that the \ac{esc} controller~\eqref{eq:controller} drives the plant~\eqref{eq:plant} to a neighborhood of the optimal equilibrium~\eqref{eq:equilibrium} where the size of this neighborhood depends on the amplitude $\delta$ of the dither $d_t$. In practice, a vanishing dither~\cite{ATTA2019147,yin2019optimizing} can be used to provide convergence to the optimal, rather than only a neighborhood.

\subsection{Proof of Theorem~\ref{thm:main}}

In this section, we prove Theorem~\ref{thm:main}. First, we analyze the \ac{esc} controller~\eqref{eq:controller} under the idealistic condition where the \ac{bls} estimator~\eqref{eq:estimator} is perfect $\hat{\theta}_t = \nabla \cost(r_t)$ and thus the step-size~\eqref{eq:step-size} is maximal $\alpha_t = 1$. We will then examine how the adaptive step-size $\alpha_t$ can be used to make the \ac{esc} controller~\eqref{eq:controller} robust to imperfect gradient estimates $\hat{\theta}_t \neq \nabla \cost(r_t)$. Finally, we will show that our \ac{bls} estimator~\eqref{eq:estimator} satisfies our conditions for stability. 

\begin{proposition}
\label{prop:convergence}
Let Assumptions~\ref{assume:plant} and~\ref{assume:cost} hold. Let $K$ satisfy~\eqref{eq:esc-gain}. Then the optimal equilibrium~\eqref{eq:equilibrium} is \ac{iss} for the closed-loop system~\eqref{eq:plant} and~\eqref{eq:controller} where $\hat{\theta}_t = \nabla \cost(r_t)$ and $\alpha_t = 1$. 
\end{proposition}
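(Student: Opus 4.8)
The plan is to exploit the fact that, under perfect gradient knowledge $\hat{\theta}_t = \nabla\cost(r_t)$ and $\alpha_t = 1$, the controller~\eqref{eq:controller} collapses to the autonomous gradient descent $r_{t+1} = r_t - K\nabla\cost(r_t)$, which is \emph{decoupled} from the plant state $x_t$. The closed loop is therefore a cascade: the controller subsystem in $r_t$ drives the plant subsystem in $x_t$ through $u_t = r_t + d_t$, with the dither $d_t$ entering only the plant. I would establish \ac{iss} by first proving global asymptotic stability of the controller subsystem, then invoking the \ac{iss} property of the plant from Assumption~\ref{assume:plant}\ref{assume:plant-iss}, and finally assembling the two through a standard cascade argument.

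For the controller subsystem I would use the steady-state cost itself as a Lyapunov function, $V_c(r) = \cost(r)$. Since the identity steady-state map gives $y = r$ at equilibrium, the unique stationary point of $\cost$ is $\ropt$ (uniqueness following from the gradient lower bound in Assumption~\ref{assume:cost}\ref{assume:cost-lyapunov}), and that same assumption furnishes both the sandwich $\kappa_1(\|r - \ropt\|) \le V_c(r) \le \kappa_2(\|r - \ropt\|)$ and the bound $\|\nabla\cost(r)\| \ge \kappa_3(\|r - \ropt\|)$. Writing $\theta = \nabla\cost(r_t)$ and applying Taylor's theorem with the upper curvature bound $\nabla^2\cost \preceq \bar{H}$ along the segment $[r_t, r_{t+1}]$ yields
\begin{align*}
V_c(r_{t+1}) - V_c(r_t) \le -\theta^\trans\Big(K - \tfrac{1}{2} K \bar{H} K\Big)\theta .
\end{align*}
The gain condition~\eqref{eq:esc-gain} gives $K - K\bar{H}K \succeq \gamma K^2$, whence $K - \tfrac{1}{2}K\bar{H}K = \tfrac{1}{2}K + \tfrac{1}{2}(K - K\bar{H}K) \succeq \tfrac{1}{2}K \succ 0$; combined with $\|\theta\| \ge \kappa_3(\|r_t - \ropt\|)$ this produces the negative-definite decrease $V_c(r_{t+1}) - V_c(r_t) \le -\tfrac{1}{2}\ubar{\lambda}(K)\,\kappa_3(\|r_t - \ropt\|)^2$. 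Together with the sandwich bounds, the standard discrete-time Lyapunov theorem then yields a class-$\KL$ estimate $\|r_t - \ropt\| \le \beta_c(\|r_0 - \ropt\|, t)$, so $r_t \to \ropt$.

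With the controller state converging, I would close the argument on the plant. By Assumption~\ref{assume:plant}\ref{assume:plant-iss} every input $u$ admits an \ac{iss} equilibrium $\pi(u)$ with $\pi$ Lipschitz, which (through a converse \ac{iss}-Lyapunov function, or directly) gives an \ac{iss} estimate for $x_t$ with respect to the input sequence $u_t = r_t + d_t$ about $\xopt = \pi(\ropt)$. Since $r_t \to \ropt$ and $\|d_t\| \le \delta$, the input remains in a $\delta$-dependent tube around $\ropt$, and the plant state converges to a $\delta$-neighborhood of $\xopt$. To obtain \ac{iss} for the joint state $(x_t, r_t)$ about $(\xopt, \ropt)$ I would invoke the standard result that the cascade of a globally asymptotically stable system feeding an \ac{iss} system is \ac{iss} with respect to the residual external input $d_t$ --- equivalently, construct a composite Lyapunov function $V = V_p + \rho\, V_c$ for a plant \ac{iss}-Lyapunov function $V_p$ and a small $\rho > 0$, and verify its decrease along the closed loop.

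I expect the main obstacle to be the cascade step rather than the controller analysis. Two points require care: translating the qualitative ``\ac{iss} equilibrium $\pi(u)$'' hypothesis into a quantitative \ac{iss}-Lyapunov estimate that is uniform in the slowly-varying input $u_t$, and then reconciling the two subsystems so that the composite decrease dominates the cross terms generated by the dependence of $\pi(u_t)$ on the moving reference $r_t$. The Lipschitz continuity of $\pi$ is the key lever, since it bounds $\|\pi(r_t + d_t) - \xopt\|$ by a constant multiple of $\|r_t - \ropt\| + \delta$, letting the controller's $\KL$ estimate be absorbed into the plant's \ac{iss} gain. By contrast, the gradient-descent decrease itself is routine once the curvature bound and~\eqref{eq:esc-gain} are in hand.
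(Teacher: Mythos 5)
Your argument is essentially correct, but it takes a genuinely different route from the paper's. You observe that under $\hat{\theta}_t = \nabla\cost(r_t)$ the reference dynamics $r_{t+1} = r_t - K\nabla\cost(r_t)$ are autonomous, prove that subsystem globally asymptotically stable with $V_c = \cost$, and then close the loop with a cascade lemma (autonomous asymptotically stable subsystem feeding an \ac{iss} plant). The paper instead builds a single composite Lyapunov function $V = V_x + V_r$ in the coordinates $\tilde{x} = x - \pi(r_t)$, $\tilde{r} = r - \ropt$, and explicitly bounds the increase of $V_x$ caused by the \emph{moving} equilibrium $\pi(r_t)$ --- via smoothness of $V_x$, Young's inequality, and Lipschitz continuity of $\pi$ --- obtaining a cross term $\tfrac{\gamma}{2}\hat{\theta}^\trans K^2 \hat{\theta}$ with $\gamma = (\rho + \gamma_0)\ell_\pi^2$ that is then cancelled by the full gain condition~\eqref{eq:esc-gain}. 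The difference is consequential in three ways. (i) Your route consumes only the plant-independent part of~\eqref{eq:esc-gain}: you need $K - \tfrac{1}{2}K\bar{H}K \succ 0$, for which $\gamma \geq 0$ suffices, so your proof cannot explain why $\gamma$ must be tied to the plant --- which is precisely the lesson the paper draws from this proposition (integral action can destabilize the plant even with a perfect gradient); note also that your step $K - K\bar{H}K \succeq \gamma K^2 \succeq 0$ silently assumes $\gamma \geq 0$, which holds for the paper's intended $\gamma$ but is not stated in~\eqref{eq:esc-gain}. (ii) The composite Lyapunov function and its quantitative decrease $\Delta V \leq -q_x^0 - \tfrac{1}{2}\|\hat{\theta}\|_K^2$ are reused verbatim in Corollary~\ref{cor:exploitation} and Theorem~\ref{thm:main}, where $\hat{\theta}_t$ depends on $y_t$ through the estimator and the cascade structure disappears, so your argument would not carry over to the results that follow. (iii) The step you defer as the main obstacle --- upgrading the per-constant-input \ac{iss} hypothesis of Assumption~\ref{assume:plant} to a quantitative estimate that tolerates the moving reference --- is exactly where the paper's proof does its work (converse Lyapunov theorem, smoothing, Young's inequality, Lipschitz $\pi$), so the cascade step you label routine in fact contains the substance of the argument. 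As a self-contained proof of Proposition~\ref{prop:convergence}, your plan is viable and arguably cleaner; as a foundation for the remainder of the paper, it is not a substitute for the composite-Lyapunov construction.
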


\begin{proof}
Define $\tilde{r}_t = r_t-\ropt$ and $\tilde{x}_t = x_t - \pi(r_t)$. We will prove input-to-state stability using a candidate Lyapunov function of the form
\begin{align}
\label{eq:lyapunov}
V(\tilde{x},\tilde{r}) = V_x(\tilde{x}) + V_r(\tilde{r})
\end{align}
where $V_x$ and $V_r$ are candidate Lyapunov functions for the plant~\eqref{eq:plant} and controller~\eqref{eq:controller}, respectively.

Since each constant equilibrium $\bar x^+=\bar x = \pi(\bar r)$ of the plant~\eqref{eq:plant} is \ac{iss} by Assumption~\ref{assume:plant}, there exists an \ac{iss} Lyapunov function $V_x$ that satisfies
\begin{subequations}
\label{eq:lyapunov-plant}
\begin{align}
\ubar{p}\big(\| \tilde{x} \|\big) \leq V_x\big( \tilde{x} \big) &\leq \phantom{-}\bar{p}\big(\| \tilde{x}\|\big) \\
V_x\big(f(x,\bar{u})-\bar{x}^+\big) - V_x\big( \tilde{x}\big) &\leq -q\big(\| \tilde{x} \|\big) + \sigma(\| d_t \|) 
\end{align}
\end{subequations}
according to the converse Lyapunov function theorem~\cite{Jiang2002} where $\ubar{p},~\bar{p},~q \in \K_\infty$ and $\sigma \in \K$. Here, the input $u_t = \bar r + d_t$ is dithered about the set-point $\bar r$. When the target equilibrium is varying $\bar{x}^+ \neq \bar{x}$, then the Lyapunov function~\eqref{eq:lyapunov-plant} satisfies
\begin{align*}
\Delta V_x(\tilde{x}) = &
V_x(\tilde{x}^+) - V_x(x^+ - \bar{x}) 
+
\underbrace{V_x(x^+ - \bar{x})  - V_x(\tilde{x})}_{\leq -q(\|\tilde{x}\|)}.
\end{align*}
where the first-term is the increase of the Lyapunov function due to the changing set-point and the second-term decrease due to stability. 
We can assume without loss of generality that $V_x$ is smooth~\cite{Kellett2003}, so that the first term above can be bounded by a quadratic 
\begin{align*}
\Delta V _x(\tilde{x})  \leq \nabla V(\tilde{x})^\trans \Delta\bar{x} + \tfrac{\rho}{2} \Delta\bar{x}^\trans \Delta\bar{x}
\end{align*}
where $\Delta \bar{x} = \bar{x}^+ - \bar{x}$ is the change in the equilibrium state and $\rho I \succeq \nabla^2 V_x$ is an upper-bound on the curvature of $V_x$. By Young's inequality $\nabla V^\trans \Delta \bar{x} \leq \tfrac{1}{2\gamma_0} \nabla V_x^\trans \nabla V_x + \tfrac{\gamma_0}{2} \Delta \bar{x}^\trans \Delta \bar{x}$, we have 
\begin{align}
\label{eq:peter-and-paul}
\Delta V _x(\tilde{x}) \leq  -q + \frac{1}{2\gamma_0}  \nabla V_x^\trans P^{-1} \nabla V_x  + \frac{\rho+\gamma_0}{2} \Delta \bar{x}^\trans P \Delta \bar{x}
\end{align}
where  $q_x^0 = q - \tfrac{1}{2\gamma_0} \nabla V_x^\trans \nabla V_x \succ 0$ for an appropriate scaling of $\gamma_0>0$. Since $\pi$ is Lipschitz continuous $\| \Delta \bar{x} \| \leq \ell_\pi \| \Delta r \|$, we obtain
$$\Delta V_x(\tilde{x}) \leq -q_x^0(\| \tilde{x} \|) + \tfrac{1}{2}  \gamma \hat{\theta}^\trans K^2 \hat{\theta}$$
where $\gamma = (\rho+\gamma_0 ) \ell_\pi^2$ and $\Delta r = K \hat{\theta}$. 

A natural choice for the controller~\eqref{eq:controller} Lyapunov function $V_r$ is the cost function
\begin{align}
\label{eq:lyapunov-controller}
V_r(\tilde{r}) = \cost(\tilde{r}+\ropt) - \cost(\ropt)
\end{align}
where $V_r(0) = 0$ by construction. By Assumption~\ref{assume:cost}, the cost $\cost$ is bounded above and below by class-$\K_\infty$ functions. By Taylor's theorem\footnote{Taylor's theorem, not a Taylor approximation.} and the controller dynamics~\eqref{eq:controller}, we have 
\begin{align*}
\Delta V_r =  V_r(\tilde{r}^+) - V_r(\tilde{r}) 
&\leq \nabla\cost(r)^\trans\Delta r + \tfrac{1}{2} \Delta r^\trans \bar{H} \Delta r \\
&= -\hat{\theta}^\trans K \hat{\theta}  + \tfrac{1}{2} \hat{\theta}^\trans K \bar{H} K \hat{\theta} 
\end{align*}
where $\hat{\theta} = \nabla \cost$ and $\Delta r = K \hat{\theta}$ for $\alpha_t = 1$. Thus, the combined Lyapunov function~\eqref{eq:lyapunov} satisfies 
\begin{align*}
\Delta V 
&\leq -q_x^0(\| \tilde{x} \|) - \hat{\theta}^\trans K \hat{\theta} + \tfrac{1}{2} \hat{\theta}^\trans K ( \gamma I + \bar{H})K \hat{\theta} \\
&\leq -q_x^0(\| \tilde{x} \|) - \tfrac{1}{2}  \hat{\theta}^\trans K \hat{\theta}
\end{align*}
where $K - \tfrac{1}{2} K \big(\gamma I + \bar{H} \big) K \succeq \tfrac{1}{2} K$ by~\eqref{eq:esc-gain}. By Assumption~\ref{assume:cost}, the state $\tilde{r}$ of the controller~\eqref{eq:controller} is bounded by the gradient $\kappa_3( \| \tilde{r} \|) \leq \| \theta \|$. Therefore, the combined Lyapunov function~\eqref{eq:lyapunov} is bounded by class-$\mathcal{K}_\infty$ functions and satisfies
$$\Delta V(\tilde{x},\tilde{r}) \leq - q_x^0(\|\tilde{x}\|) - q_r^0(\|\tilde{r}\|) + \sigma(\| d_t \|) $$
where $q_x^0 = q - \tfrac{1}{2\gamma_0} \nabla V_x^\trans \nabla V_x$ and $q_r^0( \tilde{r} ) = \ubar{\lambda}(K) \kappa_3( \| \tilde{r} \| )$ are class-$\mathcal{K}_\infty$ functions and $\sigma \in \K$. Thus, by Proposition~2.3 in~\cite{Jiang2001} the optimal equilibrium~\eqref{eq:equilibrium} is \ac{iss}. 
\end{proof}

The Lyapunov function~\eqref{eq:lyapunov} defined in the proof of Proposition~\ref{prop:convergence} will be used to prove Theorem~\ref{thm:main}. The proof of Proposition~\ref{prop:convergence} uses similar arguments to the proof of Theorem 4.1 from~\cite{Guay2015}. However, our proof highlights the issue that without an appropriate controller gain~\eqref{eq:esc-gain}, the integral-action of the gradient descent can destabilize the plant~\eqref{eq:plant}, even when the gradient is perfectly estimated $\hat{\theta} = \nabla \cost$. As a quick aside, the following corollary shows how the derivation of the controller gain~\eqref{eq:esc-gain} for linear plants. 

\begin{corollary}
\label{cor:linear-tuning}
For a linear plant~\eqref{eq:plant}, Proposition~\ref{prop:convergence} holds if the \ac{esc} controller gain $K$ satisfies
\begin{align}
K - K \big( \bar{H} \!+\! B^\trans (I \!-\! A)^{-\trans} P Q^{-1} P (I \!-\! A)^\trans B \big) K \succeq 0.
\end{align}
\end{corollary}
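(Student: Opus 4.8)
The plan is to specialize the plant~\eqref{eq:plant} to the linear model $x_{t+1}=Ax_t+Bu_t$, $y_t=Cx_t+Du_t$ and then to re-run the proof of Proposition~\ref{prop:convergence}, retaining the \emph{matrix} structure of the equilibrium shift rather than collapsing it to the scalar Lipschitz bound $\|\Delta\bar x\|\le\ell_\pi\|\Delta r\|$. The advantage of the linear case is that the equilibrium map is available in closed form: solving $\bar x=A\bar x+B\bar u$ gives $\pi(r)=(I-A)^{-1}Br$, so the change in equilibrium state is exactly $\Delta\bar x=(I-A)^{-1}B\,\Delta r=(I-A)^{-1}BK\hat{\theta}$. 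Here $I-A$ is invertible because Assumption~\ref{assume:plant}\ref{assume:plant-iss} makes the closed loop \ac{iss}, hence $A$ is Schur. Everything else in the proof of Proposition~\ref{prop:convergence} is unchanged; only the set-point term $\tfrac{\rho+\gamma_0}{2}\Delta\bar x^\trans P\Delta\bar x$ appearing in~\eqref{eq:peter-and-paul} must be re-expressed through this exact relation.

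For the plant Lyapunov function I would take the quadratic $V_x(\tilde x)=\tilde x^\trans P\tilde x$, where $P\succ0$ solves the discrete Lyapunov equation $A^\trans PA-P=-Q$ for a chosen $Q\succ0$; such a pair exists precisely because $A$ is Schur, and $V_x$ directly realizes the smooth \ac{iss} Lyapunov function that the proof of Proposition~\ref{prop:convergence} invokes generically. With this choice the fixed-equilibrium decrease is the exact quadratic $V_x(A\tilde x+Bd)-V_x(\tilde x)=-\tilde x^\trans Q\tilde x+(\text{terms in }d)$, so that $q(\|\tilde x\|)=\tilde x^\trans Q\tilde x$, and the gradient is the linear map $\nabla V_x(\tilde x)=2P\tilde x$. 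These explicit forms are what permit the generic scalar $\gamma$ to be replaced by a specific matrix.

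The key step is the weighted Young's inequality used at~\eqref{eq:peter-and-paul}: rather than leaving its weight generic, I would select it so that the cross term splits as $\nabla V_x(\tilde x)^\trans\Delta\bar x=2\tilde x^\trans P\Delta\bar x\le\tilde x^\trans Q\tilde x+\Delta\bar x^\trans PQ^{-1}P\Delta\bar x$. The first summand is absorbed into the stability decrease $-\tilde x^\trans Q\tilde x$, leaving the residual $\Delta\bar x^\trans PQ^{-1}P\Delta\bar x$. Substituting $\Delta\bar x=(I-A)^{-1}BK\hat{\theta}$ turns this residual into $\hat{\theta}^\trans K\,\Gamma\,K\hat{\theta}$ with $\Gamma=B^\trans(I-A)^{-\trans}PQ^{-1}P(I-A)^{-1}B$, which plays exactly the role of $\gamma\hat{\theta}^\trans K^2\hat{\theta}$ in Proposition~\ref{prop:convergence}. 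Carrying this through the combined bound $\Delta V\le-q_x^0-\hat{\theta}^\trans K\hat{\theta}+\tfrac{1}{2}\hat{\theta}^\trans K(\Gamma+\bar{H})K\hat{\theta}$ and requiring the last two terms to be dominated reproduces~\eqref{eq:esc-gain} with $\gamma I$ replaced by $\Gamma$, i.e.\ the stated inequality $K-K(\bar{H}+\Gamma)K\succeq0$.

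I expect the main obstacle to be the bookkeeping in this Young's-inequality step: one must verify that the chosen weight makes the $\tilde x$-quadratic no larger than the available decrease $\tilde x^\trans Q\tilde x$, which constrains the Young parameter $\gamma_0$ and forces the curvature constant $\rho=2\bar\lambda(P)$ of the quadratic $V_x$ to be folded into the scaling of $Q$ (since rescaling $Q$ rescales both $P$ and $\Gamma$ linearly, the stray scalar factor can be normalized away). A minor structural point worth recording is that the residual is the symmetric positive-semidefinite matrix $B^\trans(I-A)^{-\trans}PQ^{-1}P(I-A)^{-1}B$, so the second factor in the statement is most naturally read as $(I-A)^{-1}$.
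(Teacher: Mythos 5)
Your proposal follows essentially the same route as the paper's proof: a quadratic Lyapunov function $V_x=\tfrac{1}{2}\tilde{x}^\trans P\tilde{x}$ from the discrete Lyapunov equation $A^\trans PA-P=-Q$, the exact equilibrium map $\Delta\bar{x}=(I-A)^{-1}BK\hat{\theta}$, and a matrix-weighted Young's inequality whose weight (your choice corresponds to the paper's $\Gamma=P^{-1}QP^{-1}$) converts the cross term into the residual $\hat{\theta}^\trans K B^\trans(I-A)^{-\trans}PQ^{-1}P(I-A)^{-1}BK\hat{\theta}$. You even flag the same two loose ends present in the paper's own derivation --- the second-order term $\tfrac{1}{2}\Delta\bar{x}^\trans P\Delta\bar{x}$, which the paper carries as $P+PQ^{-1}P$ before silently dropping the $P$ in the final statement, and the fact that the trailing factor should read $(I-A)^{-1}$ --- so the argument matches in substance and in its imperfections.
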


\begin{proof}
For a linear plant~\eqref{eq:plant}, we can use a quadratic $V_x(\tilde{x}) = \tfrac{1}{2}\tilde{x}^\trans P \tilde{x}$ plant Lyapunov function~\eqref{eq:lyapunov-plant} where $P$ satisfies the Lyapunov equation $A^\trans P A - P = -Q$ for some $Q \succ 0$. We can then use a matrix $\Gamma$, instead of a scalar $\gamma_0$, in Young's inequality~\eqref{eq:peter-and-paul} to obtain
\begin{align*}
\Delta V_x(\tilde{x}) = - \tilde{x}^\trans Q \tilde{x} + \tfrac{1}{2} \tilde{x}^\trans P \Gamma P \tilde{x} + \tfrac{1}{2} \Delta \bar{x}^\trans ( P + \Gamma^{-1}) \Delta \bar{x}^\trans
\end{align*}
for some $\Gamma = \Gamma^\trans \succ 0$ where $\nabla V_x = P\tilde{x}$ and $\nabla^2 V_x = P$. Following the argument of the proof of Proposition~\ref{prop:convergence},  we require $-Q + P\Gamma P \preceq 0$ for stability. Or equivalently $\Gamma \preceq P^{-1} Q P^{-1}$. Thus, 
\begin{align*}
\Delta V_x(\tilde{x}) \preceq \tfrac{1}{2} \Delta \bar{x}^\trans ( P + PQ^{-1}P) \Delta \bar{x}.
\end{align*}
Finally, note that for a linear plant~\eqref{eq:plant} the change in equilibrium state $\bar{x}$ satisfies $\Delta \bar{x} = (I-A)^{-1} B \Delta r = (I-A)^{-1} BK \hat{\theta}$. The remainder of the stability proof is identical to the proof of Proposition~\ref{prop:convergence}.
\end{proof}

The linear tuning in Corollary~\ref{cor:linear-tuning} can provide some insight for tuning the controller~\eqref{eq:controller} gain~\eqref{eq:esc-gain} for a nonlinear plants~\eqref{eq:plant}, which is often challenging. 

Next, we examine the robustness of the \ac{esc} controller~\eqref{eq:controller} to imperfect gradient estimates $\hat{\theta}_t \neq \nabla \cost(r_t)$.  We will consider  gradient estimation errors $\tilde{\theta}_t := \hat{\theta}_t - \nabla \cost(r_t)$ that are slightly smaller $\| \tilde{\theta} \|_K \leq (1-\ubar{\alpha}) \| \hat{\theta} \|_K$ than the estimated gradient $\hat{\theta}_t$ 
\begin{align}
\label{eq:robust-set}
\tilde{\Theta}_t^{\max} = \left\{ \tilde{\theta} \in \reals^{n_y} : \| \tilde{\theta} \|_K \leq ( 1 - \ubar{\alpha}) \| \hat{\theta}_t \|_K  \right\}
\end{align}
where $0<\ubar{\alpha}\ll 1$. The following corollary shows that the optimal equilibrium~\eqref{eq:equilibrium} remains \ac{iss} for gradient estimation errors $\tilde{\theta}_t = \hat{\theta}_t - \nabla\cost(r_t)$ that satisfy the bound $\tilde{\theta}_t \in \tilde{\Theta}_t \subseteq  \tilde{\Theta}_t^{\max}$.

\begin{corollary}
\label{cor:exploitation}
Let Assumptions~\ref{assume:plant} and~\ref{assume:cost} hold. Let the set $\tilde{\Theta}_t$ of gradient estimation errors $\tilde{\theta}_t = \hat{\theta}_t - \nabla \cost_t \in \tilde{\Theta}_t$ satisfy the bound $\tilde{\Theta}_t \subseteq \tilde{\Theta}_t^{\max}$. Let $K$ satisfy~\eqref{eq:esc-gain} and the step-size $\alpha_t$ satisfy 
\begin{align}
\label{eq:step-size-generic}
\alpha_t^\star = \max\left\{ 0, 1 - \max_{\tilde{\theta} \in \tilde{\Theta}_t}  \frac{\tilde{\theta}^\trans K \hat{\theta}_t}{ \| \hat{\theta}_t\|_K^2}\right\}.
\end{align}
Then the optimal equilibrium~\eqref{eq:equilibrium} is \ac{iss} for the closed-loop system ~\eqref{eq:plant} and~\eqref{eq:controller}
\begin{align}
\label{eq:lyapunov-exploitation}
\Delta V(\tilde{x},\tilde{r}) \leq - q_x^1(\| \tilde{x} \|) - q_r^2(\| \tilde{r} \|) + \sigma(\| d_t \|)
\end{align}
where  $q_x^1, q_r^2 \in \K_\infty$ and $\sigma \in \K$.
\end{corollary}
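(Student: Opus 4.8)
The plan is to recycle the composite Lyapunov function $V = V_x + V_r$ from the proof of Proposition~\ref{prop:convergence} and recompute its one-step decrease along the \emph{actual} controller step $\Delta r = -\alpha_t^\star K \hat{\theta}_t$ and the \emph{imperfect} gradient $\nabla\cost(r_t) = \hat{\theta}_t - \tilde{\theta}_t$. For the controller part, Taylor's theorem applied to \eqref{eq:lyapunov-controller} gives $\Delta V_r \leq \nabla\cost(r)^\trans \Delta r + \tfrac{1}{2}\Delta r^\trans \bar{H}\Delta r$; substituting $\nabla\cost = \hat{\theta}_t - \tilde{\theta}_t$ and $\Delta r = -\alpha_t^\star K \hat{\theta}_t$ reproduces the ideal term $-\alpha_t^\star \|\hat{\theta}_t\|_K^2$ of Proposition~\ref{prop:convergence} plus two new contributions: the estimation-error cross term $+\alpha_t^\star \tilde{\theta}_t^\trans K \hat{\theta}_t$ and the curvature term $\tfrac{1}{2}(\alpha_t^\star)^2 \hat{\theta}_t^\trans K \bar{H} K \hat{\theta}_t$. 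For the plant part I would reuse \eqref{eq:peter-and-paul} verbatim, noting only that $\Delta\bar{x}$ now scales with $\alpha_t^\star K\hat{\theta}_t$, so the plant penalty becomes $\tfrac{1}{2}\gamma(\alpha_t^\star)^2 \hat{\theta}_t^\trans K^2 \hat{\theta}_t \leq \tfrac{1}{2}\gamma \hat{\theta}_t^\trans K^2 \hat{\theta}_t$ (since $(\alpha_t^\star)^2 \leq 1$) and the $\sigma(\|d_t\|)$ from \eqref{eq:lyapunov-plant} is carried along unchanged.

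Summing the two decrements and invoking the gain condition \eqref{eq:esc-gain} to collapse $\hat{\theta}_t^\trans K(\gamma I + \bar{H}) K \hat{\theta}_t \leq \|\hat{\theta}_t\|_K^2$, the bound reduces to $\Delta V \leq -q_x^0(\|\tilde{x}\|) + \sigma(\|d_t\|) + g(\alpha_t^\star)$, where, after bounding the cross term by its worst case over $\tilde{\theta}\in\tilde{\Theta}_t$, the scalar is $g(\alpha) = -\alpha\mu + \alpha c + \tfrac{1}{2}\alpha^2\mu$ with $\mu = \|\hat{\theta}_t\|_K^2$ and $c = \max_{\tilde{\theta}\in\tilde{\Theta}_t} \tilde{\theta}^\trans K \hat{\theta}_t$. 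The decisive observation is that the step-size \eqref{eq:step-size-generic}, $\alpha_t^\star = 1 - c/\mu$, is exactly the unconstrained minimizer of the convex quadratic $g$, and substituting it back produces the clean identity $g(\alpha_t^\star) = -\tfrac{1}{2}(\alpha_t^\star)^2 \mu$. This is the formal statement of the design principle that the adaptive step-size maximizes the \emph{guaranteed} Lyapunov decrease against the worst admissible estimation error.

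It then remains to convert $-\tfrac{1}{2}(\alpha_t^\star)^2 \|\hat{\theta}_t\|_K^2$ into a genuine class-$\K_\infty$ decrease in $\|\tilde{r}\|$. First I would use the hypothesis $\tilde{\Theta}_t \subseteq \tilde{\Theta}_t^{\max}$ with Cauchy--Schwarz in the $K$-inner product to bound $c \leq (1-\ubar{\alpha})\mu$, which forces $\alpha_t^\star \geq \ubar{\alpha}$; this guarantees the controller is always in exploitation mode and never stalls in exploration, so the $\tilde{r}$-decrease is never lost. Second, the triangle inequality $\|\nabla\cost(r_t)\|_K = \|\hat{\theta}_t - \tilde{\theta}_t\|_K \leq (2-\ubar{\alpha})\|\hat{\theta}_t\|_K$ lower-bounds $\|\hat{\theta}_t\|_K$ by the true gradient, and Assumption~\ref{assume:cost}\ref{assume:cost-lyapunov} ($\|\nabla\cost\| \geq \kappa_3(\|\tilde{r}\|)$) completes the conversion, giving $-\tfrac{1}{2}(\alpha_t^\star)^2\|\hat{\theta}_t\|_K^2 \leq -q_r^2(\|\tilde{r}\|)$ with $q_r^2(s) = \tfrac{\ubar{\alpha}^2 \ubar{\lambda}(K)}{2(2-\ubar{\alpha})^2}\kappa_3(s)^2 \in \K_\infty$. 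The degenerate case $\mu = 0$ is handled separately: $\hat{\theta}_t = 0$ forces $\tilde{\theta}_t = 0$ and hence $\nabla\cost(r_t) = 0$, i.e. $\tilde{r} = 0$ since $\kappa_3 \in \K_\infty$, so the bound holds trivially with $\Delta r = 0$. Collecting terms yields \eqref{eq:lyapunov-exploitation} with $q_x^1 = q_x^0$, and \ac{iss} follows from the \ac{iss}-Lyapunov characterization (Proposition~2.3 of~\cite{Jiang2001}), exactly as in Proposition~\ref{prop:convergence}.

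I expect the main obstacle to be this last conversion rather than the Lyapunov bookkeeping: one must simultaneously exploit the \emph{upper} bound on $c$ (to keep $\alpha_t^\star$ bounded away from zero) and a \emph{lower} bound on $\|\hat{\theta}_t\|_K$ in terms of $\|\nabla\cost\|_K$ (to retain a true gradient-driven decrease), both of which rely delicately on the membership $\tilde{\theta}_t \in \tilde{\Theta}_t^{\max}$. Getting the directions of the triangle inequalities right and confirming that the minimizer $\alpha_t^\star$ is automatically feasible — nonnegative and $\geq \ubar{\alpha}$, so that both the $\max\{0,\cdot\}$ in \eqref{eq:step-size-generic} and the mode switch in \eqref{eq:controller} are inactive — is where the argument is most easily mishandled.
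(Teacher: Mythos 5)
Your proposal is correct and follows essentially the same route as the paper's proof: the same composite Lyapunov decrement, the same reduction to minimizing the scalar quadratic $g(\alpha)=\alpha(c-\mu)+\tfrac{1}{2}\alpha^2\mu$ against the worst-case error (the paper phrases this as a two-player game whose adversary strategy is $\alpha$-independent, which collapses to exactly your computation), and the same Cauchy--Schwarz and triangle-inequality chain to obtain $\alpha_t^\star\geq\ubar{\alpha}$ and to lower-bound $\|\hat{\theta}_t\|_K$ by $\|\nabla\cost\|_K$ before invoking $\kappa_3$. If anything you are slightly more careful than the paper with the constants (retaining the factor $\tfrac{1}{2}$ in $g(\alpha_t^\star)$, using $2-\ubar{\alpha}$ rather than $2$, squaring $\kappa_3$, and treating the degenerate case $\hat{\theta}_t=0$ explicitly), none of which changes the argument.
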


\begin{proof}
For imperfect gradient estimates $\hat{\theta} \neq \nabla\!\cost$, the Lyapunov function~\eqref{eq:lyapunov} from Proposition~\ref{prop:convergence} satisfies 
\begin{align}
\label{pf:lyapunov-exploitation}
\Delta V \leq -q_x^0 - \alpha_t  \nabla\!\cost^\trans K\hat{\theta} + \tfrac{1}{2} \alpha_t^2 \hat{\theta}^\trans K \hat{\theta} + \sigma(\| d_t \|)
\end{align}
where the step-size $\alpha_t \geq 0$ is a design variable we can choose to promote stability while the worst-case gradient estimation error $\tilde{\theta}_t \in \tilde{\Theta}_t$ will try to prevent stability. Since the worst-case estimation error $\tilde{\theta}_t \in \tilde{\Theta}_t$ will depend on our choice of step-size $\alpha_t$, we have the following two-player zero-sum game
\begin{align}
\label{eq:game}
\min_{\alpha_t \geq 0}~\max_{\tilde{\theta}_t \in \tilde{\Theta}_t}~ -q_x^0 - \alpha_t (\hat{\theta} - \tilde{\theta})^\trans  K\hat{\theta} + \tfrac{1}{2} \alpha_t^2 \hat{\theta}^\trans K \hat{\theta} + \sigma(\| d_t \|)
\end{align}
where $\nabla\!\cost = \hat{\theta} - \tilde{\theta}$. Here, the adversary has the advantageous position of selecting the worst-case gradient estimation error $\tilde{\theta}_t \in \tilde{\Theta}_t$ based on our choice of step-size $\alpha_t \geq 0$. However, for this particular game~\eqref{eq:game} the optimal strategy $\tilde{\theta}_t^\star (\alpha_t)$ for the gradient estimation error happens to be independent of the step-size since 
\begin{align*}
\tilde{\theta}_t^\star 
&= \arg \max_{\tilde{\theta}_t \in \tilde{\Theta}_t}~ \alpha_t \hat{\theta}_t^\trans K \tilde{\theta}
\end{align*}
where the non-negative step-size $\alpha_t \geq 0$ only scales the linear cost $\hat{\theta}^\trans K$, but does not change its direction. The optimal strategy for the step-size $\alpha_t^\star$ is given by the following scalar quadratic program
\begin{align*}
\alpha_t^\star 
&= \arg\min_{\alpha_t \geq 0}~ -q_x^0 - \alpha_t (\hat{\theta} - \tilde{\theta}^\star(\alpha_t))^\trans  K\hat{\theta} + \tfrac{1}{2} \alpha_t^2 \hat{\theta}^\trans K \hat{\theta} \\
&= \arg\min_{\alpha_t \geq 0}~ \alpha_t \big( \hat{\theta}^\trans  K \tilde{\theta}^\star - \| \hat{\theta} \|_K^2 \big) + \tfrac{1}{2} \alpha_t^2 \| \hat{\theta} \|_K^2 
\end{align*}
where $\tilde{\theta}^\star(\alpha_t) = \max_{\tilde{\theta} \in \tilde{\Theta}_t}  \tilde{\theta}^\trans K \hat{\theta}_t$ is independent of $\alpha_t$. The step-size~\eqref{eq:step-size-generic} is the explicit optimal solution of this parametric quadratic program where $\tilde{\theta}_t^\star$ is the parameter. Thus,~\eqref{eq:step-size-generic} is the game-theoretic optimal step-size $\alpha_t$ for bounded gradient estimation errors $\tilde{\theta}_t \in \tilde{\Theta}_t$. The resulting change~\eqref{pf:lyapunov-exploitation} of the Lyapunov function~\eqref{eq:lyapunov} satisfies 
\begin{align*}
\Delta V = \Delta V_x + \Delta V_r  \leq -q_x^0 - (\alpha_t^\star)^2  \| \hat{\theta} \|_K^2 + \sigma(\| d_t \|). 
\end{align*}
To prove \ac{iss}, we will show that $(\alpha_t^\star)^2  \| \hat{\theta} \|_K^2$ is bounded by a class-$\K_\infty$ function $q_r^1(\| \tilde{r} \|)$ of the controller~\eqref{eq:controller} state $\tilde{r}$. Since $\tilde{\Theta}_t \subseteq \tilde{\Theta}_t^{\max}$, we have 
\begin{align*}
\alpha_t^\star  
&\geq   1 - \max_{\tilde{\theta} \in \tilde{\Theta}_t} \frac{\tilde{\theta}^\trans K \hat{\theta}_t}{ \| \hat{\theta}_t \|_K^2} 
\geq   1 - \max_{\tilde{\theta} \in \tilde{\Theta}_t} \frac{\| \tilde{\theta}_t \|_K \| \hat{\theta}_t \|_K}{ \| \hat{\theta}_t \|_K^2} = \ubar{\alpha}
\end{align*}
where the first inequality follows from the step-size~\eqref{eq:step-size-generic}, the second inequality is the Cauchy-Schwarz inequality, and the last inequality follows from the definition~\eqref{eq:robust-set} of the set $\tilde{\Theta}_t^{\max}$. Thus, $$(\alpha_t^\star)^2  \| \hat{\theta} \|_K^2 \geq \ubar{\alpha}^2  \| \hat{\theta} \|_K^2$$ where $\ubar{\alpha} >0$. Next, we need to bound the norm $\| \hat{\theta} \|_K^2$ of the estimated gradient $\hat{\theta}_t$ by the norm $\| \theta \|_K^2$ of the actual gradient $\theta_t = \nabla \cost(r_t)$. Since $\theta_t = \hat{\theta}_t - \tilde{\theta}_t$, we have 
\begin{align*}
\| \theta \|_K = \| \hat{\theta} - \tilde{\theta} \|_K 
&\leq \| \hat{\theta} \|_K + \| \tilde{\theta} \|_K \\
&\leq \| \hat{\theta} \|_K + (1-\ubar{\alpha}) \| \hat{\theta} \|_K \leq 2 \| \hat{\theta} \|_K
\end{align*}
where the first inequality is the triangle inequality and the second inequality follows from the definition~\eqref{eq:robust-set} of the set $\tilde{\Theta}_t^{\max}$. Rearranging terms and squaring yields $\ubar{\alpha}^2 \| \hat{\theta} \|_K^2  \geq \tfrac{1}{4}\ubar{\alpha}^2 \| \theta \|_K^2$. Finally, recall from the proof of Proposition~\ref{prop:convergence} that $$\| \theta \|_K^2 \geq \ubar{\lambda}(K) \kappa_3(\| \tilde{r} \|),$$ where $\kappa_3(\| \tilde{r} \|) \in \K_\infty$.  Therefore, we conclude that~\eqref{eq:lyapunov-exploitation} holds where $q_x^1(\| \tilde{x} \|) = q_x^0(\| \tilde{x} \|) \in \K_\infty$ and $$q_r^2(\| \tilde{r} \|) = \tfrac{1}{4}\ubar{\alpha}^2 \ubar{\lambda}(K) \kappa_3(\| \tilde{r} \|) \in \K_\infty.$$ 
\end{proof}

Corollary~\ref{cor:exploitation} shows that the adaptive step-size~\eqref{eq:step-size-generic} makes our \ac{esc} controller~\eqref{eq:controller} robust to bounded ~\eqref{eq:robust-set} gradient estimation errors $\tilde{\theta}_t = \hat{\theta}_t - \nabla \cost_t \in \tilde{\Theta}_t$. The adaptive step-size~\eqref{eq:step-size-generic} ensures that the Lyapunov function~\eqref{eq:lyapunov} decreases~\eqref{eq:lyapunov-exploitation} for the worst-case gradient estimation errors $\tilde{\theta}_t \in \tilde{\Theta}_t$. When the bounds $\tilde{\Theta}_t$ on the gradient estimation errors $\tilde{\theta}_t \in \tilde{\Theta}_t$ are tight, the step-size~\eqref{eq:step-size-generic} is large and the controller~\eqref{eq:controller} is aggressive. Interestingly, the worst-case gradient estimation error $\tilde{\theta}_t^\star$ does not try to alter the descent-direction $K \hat{\theta}$, but rather amplifies the descent-direction $\hat{\theta} = \frac{1}{\ubar{\alpha}} \theta$. This can cause the integral-action of the \ac{esc} controller~\eqref{eq:controller} to overshoot the optimal, potentially leading to unstable oscillation. 

Later in Lemma~\ref{lemma:estimation-error}, we will derive the specific bounds $\tilde{\Theta}_t^{\text{\sc bls}}$ on the gradient estimation errors produce by the \ac{bls} estimator~\eqref{eq:estimator}. We will also connect the game-theoretic step-size~\eqref{eq:step-size-generic} with the step-size~\eqref{eq:step-size} used in the \ac{esc} controller~\eqref{eq:controller} in Corollary~\ref{cor:step-size}. 

The set~\eqref{eq:robust-set} is the largest set $\tilde{\Theta}_t^{\max}$ of gradient estimation errors $\tilde{\theta}$ for which the \ac{esc} controller~\eqref{eq:controller} can decrease the Lyapunov function~\eqref{eq:lyapunov}. Thus, the controller remains in the exploration mode i.e. $\alpha_t^\star \geq \ubar{\alpha}$. When the bounds $\tilde{\Theta}_t$ on the estimation errors are too large $\tilde{\Theta}_t \not\subseteq \tilde{\Theta}_t^{\max}$, the \ac{esc} controller~\eqref{eq:controller} enters the exploration mode i.e. $\alpha_t^\star = 0$. The following trivial corollary shows that optimal equilibrium~\eqref{eq:equilibrium} remains stable (but not \ac{iss} nor asymptotically stable) when the \ac{esc} controller~\eqref{eq:controller} is in exploration mode $\alpha_t^\star = 0$. 

\begin{corollary}
\label{cor:exploration}
Let Assumptions~\ref{assume:plant} and~\ref{assume:cost} hold. Let $K$ satisfy~\eqref{eq:esc-gain} and the step-size satisfy $\alpha_t^\star = 0$. Then the plant~\eqref{eq:plant} is \ac{iss} with respect to the dither signal $d_t$
\begin{subequations}
\label{eq:lyapunov-exploration}
\begin{align}
\label{eq:lyapunov-exploration-plant}
\Delta V_x(\tilde{x}) \leq - q_x^2(\| \tilde{x} \|) + \sigma(\| d_t \|),
\end{align}
and the controller~\eqref{eq:controller} is stable
\begin{align}
\label{eq:lyapunov-exploration-control}
\Delta V_r(\tilde{r}) =0,
\end{align}
\end{subequations}
where $q_x^2 \in \K_\infty$ and $\sigma \in \K$. 
\end{corollary}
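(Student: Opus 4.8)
The plan is to exploit the defining feature of exploration mode: when the game-theoretic step-size vanishes $\alpha_t^\star = 0$, the controller~\eqref{eq:controller} takes its ``otherwise'' branch and its state is frozen. This severs the coupling between the plant and controller subsystems and collapses the analysis onto the constant-equilibrium case already captured by the \ac{iss} Lyapunov inequality~\eqref{eq:lyapunov-plant}.

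First I would dispatch~\eqref{eq:lyapunov-exploration-control}. Because $\alpha_t^\star = 0 < \ubar{\alpha}$, the controller~\eqref{eq:controller} does not step, so $r_{t+1} = r_t$ and hence the error state is unchanged, $\tilde{r}^+ = \tilde{r}$. Evaluating the controller Lyapunov function~\eqref{eq:lyapunov-controller} at the two coinciding arguments gives $\Delta V_r = V_r(\tilde{r}^+) - V_r(\tilde{r}) = 0$, which is exactly~\eqref{eq:lyapunov-exploration-control}.

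Next I would establish the plant bound~\eqref{eq:lyapunov-exploration-plant}. Since the reference is held fixed, the target equilibrium does not move, $\bar{x}^+ = \pi(r_{t+1}) = \pi(r_t) = \bar{x}$, so the increment $\Delta\bar{x} = \bar{x}^+ - \bar{x} = 0$ vanishes. The closed loop then reduces to the plant driven by a \emph{constant} set-point $r_t$ dithered by $d_t$, which is precisely the configuration for which~\eqref{eq:lyapunov-plant} was asserted. Specializing the decrease inequality in~\eqref{eq:lyapunov-plant} to $\bar{x}^+ = \bar{x}$ yields $V_x(\tilde{x}^+) - V_x(\tilde{x}) \leq -q(\|\tilde{x}\|) + \sigma(\|d_t\|)$, so~\eqref{eq:lyapunov-exploration-plant} holds with $q_x^2 = q \in \K_\infty$ and the same $\sigma \in \K$.

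There is no genuine obstacle here---this is why the corollary is labeled trivial. The point worth stressing is structural: the destabilizing term $\tfrac{1}{2}\gamma\,\hat{\theta}^\trans K^2 \hat{\theta}$ that appeared in Proposition~\ref{prop:convergence} and Corollary~\ref{cor:exploitation} originated entirely from the moving set-point through $\Delta\bar{x} = K\hat{\theta}$; freezing the controller annihilates it, so neither the gain condition~\eqref{eq:esc-gain} nor the worst-case game over $\tilde{\theta}_t$ plays any role. Correspondingly, what is \emph{lost} is progress: since $\Delta V_r = 0$ rather than strictly negative, the combined equilibrium~\eqref{eq:equilibrium} is only stable---neither \ac{iss} nor asymptotically stable---while the controller dwells in exploration and merely accumulates data to sharpen the estimate $\hat{\theta}_t$.
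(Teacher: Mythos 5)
Your proof is correct and follows essentially the same route as the paper's: the frozen controller state gives $\Delta V_r = 0$ immediately, and the plant bound is just the \ac{iss} Lyapunov inequality~\eqref{eq:lyapunov-plant} specialized to a constant set-point. The additional structural remarks are accurate but not needed for the argument.
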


\begin{proof}
Since the controller~\eqref{eq:controller} state $r_{t+1} = r_t$ is constant in the exploration mode, the controller Lyapunov function is constant~\eqref{eq:lyapunov-exploration-control}. The decrease condition~\eqref{eq:lyapunov-exploration-plant} for the plant Lyapunov function follows from the \ac{iss} assumption~\eqref{eq:lyapunov-plant}. 
\end{proof}

Corollary~\ref{cor:exploration} shows that the divergence from the optimal equilibrium~\eqref{eq:equilibrium} is bounded for bounded dithers $d_t$. This allows the dither $d_t$ to safely probe the system~\eqref{eq:plant} to gather exciting data and reduce the gradient estimation errors $\tilde{\Theta}_t$. However, it does not provide convergence towards the optimal equilibrium~\eqref{eq:equilibrium}. For convergence, we need the \ac{esc} controller~\eqref{eq:controller} to interminably enter the exploitation mode $\alpha_t^\star \geq \ubar{\alpha}$. This requires that the bounds $\tilde{\Theta}_t^{\text{\sc bls}}$ on the gradient estimation errors produce by the \ac{bls} estimator~\eqref{eq:estimator} are sufficient small~\eqref{eq:robust-set}. The following lemma establishes bounds $\tilde{\Theta}_t^{\text{\sc bls}}$ on the gradient estimation errors produce by the \ac{bls} estimator~\eqref{eq:estimator}.

\begin{lemma}
\label{lemma:estimation-error}
Let Assumptions~\ref{assume:plant} and~\ref{assume:cost} hold. The estimation errors $\tilde{\theta}_t = \hat{\theta}_t - \nabla \cost_t$ produced by the \ac{bls} estimator~\eqref{eq:estimator} are contained in the set
\begin{align}
\label{eq:estimation-error}
\tilde{\Theta}_t^{\text{\sc bls}} = \left\{ \tilde{\theta} : \big\| \Lambda_t^{-1} \tilde{\theta} \big\|^2 \leq 1 \right\}.
\end{align}
\end{lemma}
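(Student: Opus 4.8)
The plan is to exploit the weighted least-squares structure of the estimator~\eqref{eq:estimator} to write the estimation error as a single weighted sum of rank-one residuals, and then show that the novel weighting~\eqref{eq:weighting} is designed exactly so that each residual contributes at most $1/N$ to the normalized error. First I would use the identity $\theta_t = \Lambda_t \Lambda_t^{-1}\theta_t = \Lambda_t\frac{1}{N}\sum_k w_k \Delta y_k\Delta y_k^\trans \theta_t$, where $\theta_t=\nabla\cost(r_t)$, to subtract the true first-order term $\Delta y_k^\trans\theta_t$ from the bracketed measurement in~\eqref{eq:estimator-mean}. Collecting terms gives $\tilde{\theta}_t = \Lambda_t\frac{1}{N}\sum_{k=t-N}^{t-1} w_k\Delta y_k\,\epsilon_k$, where $\epsilon_k$ is the per-sample residual between the curvature-corrected cost increment and its first-order model. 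Multiplying on the left by $\Lambda_t^{-1}$ then cancels the covariance, leaving the clean expression $\Lambda_t^{-1}\tilde{\theta}_t = \frac{1}{N}\sum_k w_k\Delta y_k\epsilon_k$, so that the claimed membership~\eqref{eq:estimation-error} reduces to the scalar inequality $\big\|\frac{1}{N}\sum_k w_k\Delta y_k\epsilon_k\big\| \leq 1$.

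The crux is to bound each residual $\epsilon_k$ using the curvature bounds of Assumption~\ref{assume:cost}\ref{assume:cost-curvature}. I would apply Taylor's theorem in integral (mean-value) form, as the footnote insists rather than a truncated approximation, to write $\Delta\cost_k = \nabla\cost(y_t)^\trans\Delta y_k - \tfrac{1}{2}\Delta y_k^\trans H_k\Delta y_k$ and $\nabla\cost(y_t) = \theta_t + G\,e_t$, where $H_k = \int_0^1 2(1-s)\nabla^2\cost(y_t - s\Delta y_k)\,ds$ and $G = \int_0^1\nabla^2\cost(r_t + s e_t)\,ds$ are averaged Hessians, each satisfying $\ubar{H} \preceq (\cdot) \preceq \bar{H}$ because they are convex averages of Hessians lying in that range. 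Substituting and cancelling $\theta_t^\trans\Delta y_k$, the median-curvature correction $\Delta y_k^\trans\hat{H}(e_t - \tfrac{1}{2}\Delta y_k)$ is engineered precisely to subtract off $\hat{H} = \tfrac{1}{2}(\bar{H}+\ubar{H})$, so the residual collapses to the curvature-deviation form $\epsilon_k = \Delta y_k^\trans(G - \hat{H})e_t - \tfrac{1}{2}\Delta y_k^\trans(H_k - \hat{H})\Delta y_k$. The key point is that $\ubar{H}\preceq\nabla^2\cost\preceq\bar{H}$ implies the two-sided matrix bound $-\tfrac{1}{2}\tilde{H} \preceq G - \hat{H} \preceq \tfrac{1}{2}\tilde{H}$, and likewise for $H_k - \hat{H}$, where $\tilde{H} = \bar{H} - \ubar{H}$.

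Next I would invoke a generalized Cauchy--Schwarz inequality: whenever $-S \preceq M \preceq S$ with $S \succeq 0$, one has $|a^\trans M b| \leq \|a\|_S\|b\|_S$, seen by writing $M = S^{1/2}\tilde{M}S^{1/2}$ with $\|\tilde{M}\|\leq 1$. Taking $S = \tfrac{1}{2}\tilde{H}$ yields $|\epsilon_k| \leq \tfrac{1}{2}\|\Delta y_k\|_{\tilde{H}}\|e_t\|_{\tilde{H}} + \tfrac{1}{4}\|\Delta y_k\|_{\tilde{H}}^2 = \tfrac{1}{2}\|\Delta y_k\|_{\tilde{H}}\big(\|e_t\|_{\tilde{H}} + \tfrac{1}{2}\|\Delta y_k\|_{\tilde{H}}\big)$. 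The weighting~\eqref{eq:weighting} is exactly the reciprocal of $\tfrac{1}{2}\|\Delta y_k\|\,\|\Delta y_k\|_{\tilde{H}}\big(\|e_t\|_{\tilde{H}} + \tfrac{1}{2}\|\Delta y_k\|_{\tilde{H}}\big)$, so multiplying through gives the sharp per-sample bound $w_k\|\Delta y_k\|\,|\epsilon_k| \leq 1$. Finally, applying the triangle inequality to $\Lambda_t^{-1}\tilde{\theta}_t = \frac{1}{N}\sum_k w_k\Delta y_k\epsilon_k$ and averaging over the $N$ samples in the batch gives $\|\Lambda_t^{-1}\tilde{\theta}_t\| \leq \frac{1}{N}\sum_k w_k\|\Delta y_k\|\,|\epsilon_k| \leq \frac{1}{N}\cdot N = 1$, which is~\eqref{eq:estimation-error}.

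I expect the main obstacle to be the Taylor bookkeeping in the second step: extracting the exact residual $\epsilon_k$ in a form where the correction term cleanly cancels the median curvature $\hat{H}$ and leaves matrices bounded by $\pm\tfrac{1}{2}\tilde{H}$. Care is needed with the sign and grouping of the $\hat{H}(e_t - \tfrac{1}{2}\Delta y_k)$ correction and with the fact that the two averaged Hessians $G$ and $H_k$ are taken along different segments (from $r_t$ to $y_t$, and from $y_t$ to $y_k$, respectively). Once the residual is in curvature-deviation form, everything else --- the generalized Cauchy--Schwarz step and the exact cancellation against the weighting~\eqref{eq:weighting} --- is forced by the construction of $w_k$ and is routine.
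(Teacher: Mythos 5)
Your proposal is correct and follows the same architecture as the paper's proof: rewrite the curvature-corrected cost increment as an exact linear regression $\Delta y_k^\trans\theta_t$ plus a set-bounded residual, observe that the weighting~\eqref{eq:weighting} is the exact reciprocal of the residual's worst-case magnitude (times $\|\Delta y_k\|$) so that each normalized noise term lies in $[-1,1]$, substitute into the \ac{bls} formula so that $\Lambda_t$ cancels, and finish with the triangle inequality over the $N$ samples. The two places where you deviate are both local and both defensible. First, you use integral-form (averaged-Hessian) Taylor remainders $G=\int_0^1\nabla^2\cost(r_t+se_t)\,ds$ and $H_k$ in place of the paper's pointwise intermediate points $z_1,z_2$; for the gradient relation~\eqref{eq:gradient-mvt} this is actually the more rigorous choice, since the mean-value theorem does not hold componentwise with a single intermediate point for vector-valued maps, and the convex-average structure still delivers $\ubar{H}\preceq G, H_k\preceq\bar{H}$, which is all that is used. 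Second, for the bilinear cross term the paper computes the exact interval $\Omega_k^2$ by solving the semidefinite programs~\eqref{eq:uncertainty-sdp} in closed form via Theorem 2.2 of the cited reference and an eigenvalue computation on a rank-2 matrix, whereas you use the elementary generalized Cauchy--Schwarz bound $|a^\trans M b|\leq\|a\|_S\|b\|_S$ for $-S\preceq M\preceq S$ with $S=\tfrac{1}{2}\tilde{H}$; the two give the identical half-width $\tfrac{1}{2}\|\Delta y_k\|_{\tilde{H}}\|e_t\|_{\tilde{H}}$ (the Cauchy--Schwarz bound is attained here), so nothing is lost and the external citation is avoided. Your sign bookkeeping for the correction term $\Delta y_k^\trans\hat{H}(e_t-\tfrac{1}{2}\Delta y_k)$ is at least as consistent as the paper's own (which switches conventions for $\Delta y_k$, $\Delta\cost_k$, and $e_t$ between the estimator definition and the proof), and since the error interval is symmetric the final bound is unaffected. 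The only technicality left implicit in both arguments is the degenerate case $\tilde{H}\not\succ 0$, where $S^{-1/2}$ does not exist and $w_k$ can blow up; this is handled by restricting to the range of $\tilde{H}$, on which the corresponding residual component vanishes identically.
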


\begin{proof}
To derive the set-bound~\eqref{eq:estimation-error} on the gradient estimation error $\tilde{\theta}_t$, we will express the gradient $\theta = \nabla \cost$ as a linear regression $\Delta \cost_k = \Delta y_k^\trans \theta + \omega_k$ where the \textit{noise} $\omega_k$ is set bounded, rather than stochastic.

According to Taylor's theorem\footnote{Note that we use Taylor's theorem, which is exact, not a Taylor approximation, which is an approximation.}, the cost $\cost$ satisfies 
\begin{align}
\label{eq:cost-taylor}
\Delta \cost(y_k) = \Delta y_k^\trans \nabla \cost(y_t)  + \tfrac{1}{2} \Delta y_k^\trans \nabla^2 \cost(z_1)  \Delta y_k
\end{align}
where the unknown curvature $H_1 = \nabla^2 \cost(z_1)$ is evaluated at an unknown point $z_1 = \mu_1 y_t + (1 - \mu_1 ) y_k$ for some $\mu_1 \in [0,1]$. The desired gradient $\nabla \cost(r_t)$ evaluated at the current reference $r_t$ is related to the gradient $\nabla \cost(y_t)$ evaluated at the current output $y_t$ by the mean-value theorem
\begin{align}
\label{eq:gradient-mvt}
\theta_t = \nabla\cost(r_t) = \nabla \cost(y_t) - \nabla^2 \cost(z_2)^\trans (r_t - y_t)
\end{align}
where $e_t = r_t - y_t$ is the tracking error and the unknown curvature $H_2 = \nabla^2 \cost(z_2)$ is again evaluated at an unknown point $z_2 = \mu_2 y_t + (1 - \mu_2 ) r_t$ for some $\mu_2 \in [0,1]$. Combining the non-approximations~\eqref{eq:cost-taylor} and~\eqref{eq:gradient-mvt}, yields
\begin{align}
\label{pf-cost-approx}
\Delta \cost_k &= \Delta y_k^\trans \theta_t + \underbrace{\tfrac{1}{2} \Delta y_k^\trans H_1 \Delta y_k}_{\omega_1} - \underbrace{e_t^\trans H_2 \Delta y_k}_{\omega_2} 
\end{align}
where $\theta_t = \nabla\cost(r_t)$, $\Delta \cost_k = \cost(y_k) - \cost(y_t)$ and $\Delta y_k = y_k - y_t$. The non-approximation~\eqref{pf-cost-approx} says that the cost $\cost$ can be written as a linear regression $\Delta \cost_k = \Delta y_k^\trans \theta + \omega_k$ where the unknown error term $\omega = \omega_1 - \omega_2$ accounts for the nonlinearity. We will show that the error term $\omega$ is bounded since the curvature $\nabla^2 \cost$ of the cost $\cost$ is bounded $\ubar{H} \preceq \nabla^2 \cost \preceq \bar{H}$. 

From the definition of positive definite matrices, the quadratic-form $\omega_1 = \tfrac{1}{2}\Delta y_k^\trans H_1 \Delta y_k$ is contained in the line interval 
\begin{align*}
\Omega_k^1 
&= \Big\{ \omega_1 \in \reals : \tfrac{1}{2}\Delta y_k^\trans \ubar{H} \Delta y_k \leq \omega_1 \leq \tfrac{1}{2}\Delta y_k^\trans \bar{H} \Delta y_k \Big\} \\
&=  \tfrac{1}{2}\| \Delta y_k \|_{\hat{H}}^2 + \tfrac{1}{4} \| \Delta y_k\|_{\tilde{H}}^2 \Big[-1,1\Big]
\end{align*}
where $\hat{H} = \tfrac{1}{2} (\bar{H} + \ubar{H})$ is the median curvature and $\tilde{H} = \bar{H} - \ubar{H}$ is the range of curvature. Deriving the bounds $\Omega_k^2$ on the nonlinearity $\omega_2 = e_t^\trans H_2 \Delta y_k$ is more complicated since it is not a quadratic-form. Since the linear function $f(H) = e_t^\trans H \Delta y_k$ is continuous, the image $\Omega_k^2 = f(\mathcal{H})$ of the connected set $\mathcal{H} = \{ H : \ubar{H} \preceq H \preceq \bar{H} \}$ is connected. Furthermore, since $\omega_2 \in \reals$ is a scalar, this set $\Omega_k^2$ is a line interval, specifically
\begin{align}
\label{eq:uncertainty-sdp}
\Omega_k^2 &= 
\Bigg\{ \omega_2 :
 \min_{\ubar{H} \preceq H \preceq \bar{H}} \tfrac{1}{2} \trace(CH)  \leq \omega_2 \leq \max_{\ubar{H} \preceq H \preceq \bar{H}}  \tfrac{1}{2} \trace(CH) 
 \Bigg\} 
\end{align}
where $$C = \Delta y_k e_t^\trans + e_t \Delta y_k^\trans$$ is the symmetric cost matrix. In other words, we can find the bounds on $\omega_2$ by solving two semi-definite programs~\eqref{eq:uncertainty-sdp} for the lower and upper bounds on the line interval. According to Theorem 2.2 from~\cite{Xu2011}, these semi-definite programs~\eqref{eq:uncertainty-sdp} have a closed-from solution, specifically
\begin{align*}
\min_{\ubar{H} \preceq H \preceq \bar{H}} \tfrac{1}{2} \trace(CH) = \tfrac{1}{2} \trace^{-}\big(\tilde{H}^{\frac{1}{2}}C \tilde{H}^{\frac{1}{2}} \big) + \tfrac{1}{2} \trace\big(C\ubar{H}\big)
\end{align*}
where $\trace^{-}$ is the trace of the projection of a matrix into the negative semi-definite cone i.e. the sum of its negative eigenvalues. The rank-2 matrix $\tilde{H}^{\frac{1}{2}}C \tilde{H}^{\frac{1}{2}}$ has exactly one negative eigenvalue $$\lambda_{-} = e_t^\trans \tilde{H} \Delta y_k - \| \Delta y_k \|_{\tilde{H}} \| e_t \|_{\tilde{H}}$$. Thus, 
\begin{align*}
\min_{\ubar{H} \preceq H \preceq \bar{H}} \tfrac{1}{2} \trace(CH) 
&= \tfrac{1}{2} \Delta y_k^\trans \tilde{H} e_t  \!-\! \tfrac{1}{2} \| \Delta y_k \|_{\tilde{H}} \| e_t \|_{\tilde{H}} \!+\! e_t^\trans \ubar{H} \Delta y_k \\
&= \Delta y_k^\trans \hat{H} e_t \!-\! \tfrac{1}{2} \| \Delta y_k \|_{\tilde{H}} \| e_t \|_{\tilde{H}}.
\end{align*}
Similarly, we can obtain the upper-bound $$\omega_2 \leq \Delta y_k^\trans \hat{H} e_t + \| \Delta y_k \|_{\tilde{H}} \| e_t \|_{\tilde{H}}.$$ 
Thus, 
\begin{align*}
\Omega_k^2 &= 
\Delta y_k^\trans \hat{H} e_t + \tfrac{1}{2} \| \Delta y_k \|_{\tilde{H}} \| e_t \|_{\tilde{H}} \Big[-1,1\Big]
\end{align*}
Therefore, the total error $\omega = \omega_1 - \omega_2$ is contained in the line interval 
\begin{align*}
\Omega_k &= 
- \Delta y_k^\trans \hat{H} \big( e_t - \tfrac{1}{2} \Delta y_k \big) + 
\frac{1}{w_k \| \Delta y\|}
\Big[-1,1\Big].
\end{align*}
where the weighting $w_k$ was defined in~\eqref{eq:weighting}. Thus, cost~\eqref{pf-cost-approx} can be rewritten  
\begin{align}
\label{pf:cost}
\Delta \cost_k + \Delta y_k^\trans \hat{H} \big( e_t - \tfrac{1}{2} \Delta y_k \big) = \Delta y_k^\trans \theta_t + \frac{1}{w_k \| \Delta y_k \|} \nu_k 
\end{align}
where $\nu_k \in [-1,1]$ is the normalized \textit{noise}. Substituting~\eqref{pf:cost} into the \ac{bls}~\eqref{eq:estimator} yields
\begin{align*}
\hat{\theta}_t 
= \Lambda_t \underbrace{ \left( \frac{1}{N} \sum_{k=t-1}^{t-N} w_k \Delta y_k \Delta y_k^\trans \right)}_{\Lambda_t^{-1}} \theta_t +  
\Lambda_t  \frac{1}{N} \sum_{k=t-1}^{t-N} \frac{\Delta y_k }{ \| \Delta y_k \|} \nu_k. \notag
\end{align*}
Thus, by definition of the estimation error $\tilde{\theta}_t = \hat{\theta}_t - \theta_1$, the bound~\eqref{eq:estimation-error} holds since
\begin{align*}
 \big\| \Lambda_t^{-1} \tilde{\theta}_t \big\| =
 \left\| \frac{1}{N}  \displaystyle{\sum}_{k=t-1}^{t-N} \frac{\Delta y_k}{\| \Delta y_k \|} \nu_k \right\| \leq  \frac{1}{N}  \displaystyle{\sum}_{k=t-1}^{t-N}  \big| \nu_k \big| \leq 1
\end{align*}
where the first inequality is the triangle inequality and the second inequality follows from $| \nu_k | \leq 1$ since $\nu_k \in [-1,1] \subset \reals$. 
\end{proof}

Lemma~\ref{lemma:estimation-error} describes a (possibly degenerate) ellipsoidal set $\tilde{\Theta}_t^{\text{\sc bls}}$ that bounds the gradient estimation errors produce by the \ac{bls} estimator~\eqref{eq:estimator}. The following lemma shows that for persistently exciting and local data $\{ \cost_k, y_k \}_{t=k}^t$, the ellipsoid~\eqref{eq:estimation-error} is non-degenerate and eventually satisfies the bounds $\tilde{\Theta}_t^{\text{\sc bls}} \subseteq \tilde{\Theta}_t^{\max}$, allowing the \ac{esc} controller~\eqref{eq:controller} to return to the exploitation mode. 

\begin{lemma}
\label{lemma:time-bound}
Let Assumptions~\ref{assume:plant} and~\ref{assume:cost} hold. Let $\nabla \cost(r_t) \neq 0$. Let the dither $d_t$ be persistently exciting and bounded $\| d_t \| \leq \delta$. Then there exists a finite time $T<\infty$ and non-zero dither amplitude $\delta > 0$ such that~\eqref{eq:estimation-error} satisfies $\tilde{\Theta}_{t+T}^{\text{\sc bls}} \subseteq \tilde{\Theta}_{t+T}^{\max}$.
\end{lemma}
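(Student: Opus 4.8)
The plan is to reduce the set-inclusion $\tilde{\Theta}_{t+T}^{\text{\sc bls}}\subseteq\tilde{\Theta}_{t+T}^{\max}$ to a single scalar inequality, and then to show that a finite settling window together with a sufficiently small---but positive---dither amplitude $\delta$ forces that inequality. Throughout, I would use that the hypothesis places the controller in the exploration mode $\alpha_t^\star=0$, so by Corollary~\ref{cor:exploration} the reference is frozen, $r_{t+j}=r$ for all $j\ge0$, and the target gradient $\theta_{t+j}=\nabla\cost(r)$ is a fixed nonzero vector. First I would characterize the worst-case error on the ellipsoid~\eqref{eq:estimation-error}: substituting $z=\Lambda_t^{-1}\tilde{\theta}$ turns $\max\{\|\tilde{\theta}\|_K:\|\Lambda_t^{-1}\tilde{\theta}\|\le1\}$ into $\max\{\|\Lambda_t z\|_K:\|z\|\le1\}$, giving
\begin{align*}
\max_{\tilde{\theta}\in\tilde{\Theta}_t^{\text{\sc bls}}}\|\tilde{\theta}\|_K=\sqrt{\bar{\lambda}(\Lambda_t K\Lambda_t)}\le\sqrt{\bar{\lambda}(K)}\,\bar{\lambda}(\Lambda_t).
\end{align*}
Since an ellipsoid lies inside the $K$-ball~\eqref{eq:robust-set} exactly when its largest $K$-radius is at most $(1-\ubar{\alpha})\|\hat{\theta}_t\|_K$, it suffices to establish $\sqrt{\bar{\lambda}(K)}\,\bar{\lambda}(\Lambda_{t+T})\le(1-\ubar{\alpha})\|\hat{\theta}_{t+T}\|_K$.

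The second step is to make $\bar{\lambda}(\Lambda_{t+T})$ small by settling and then filling the batch window. Holding $r$ fixed, Assumption~\ref{assume:plant}\ref{assume:plant-iss} supplies a $\KL/\K$ estimate $\|x_{t+j}-\pi(r)\|\le\beta(\cdot,j)+\gamma(\delta)$; choosing a settling time $T_1$ so the $\beta$-term falls below $\gamma(\delta)$, and invoking Lipschitz continuity of $g,\pi$ together with the identity steady-state map (Assumption~\ref{assume:plant}\ref{assume:plant-tracking}), I would obtain a class-$\K$ bound $\|e_{t+j}\|\le\eta(\delta)$ and $\|\Delta y_k\|\le\eta(\delta)$ with $\eta(\delta)\to0$. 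Plugging these into the weighting~\eqref{eq:weighting} lower-bounds each weight as $w_k\ge c/\eta(\delta)^3$, since its denominator is a product of three factors each $O(\eta)$.

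For the excitation bound I would argue that once the state has settled the output transients $\Delta y_k$ inherit the persistency of the dither: because the steady-state map is the identity and the plant is observable and controllable, the dither-induced output variations span $\reals^{n_y}$ with magnitude proportional to $\delta$, so that $\tfrac1N\sum_k\Delta y_k\Delta y_k^\trans\succeq\rho\,\delta^2 I$ for some $\rho>0$ once $T\ge T_1+N$. Combining this with the weight bound in the information matrix~\eqref{eq:estimator-information},
\begin{align*}
\Lambda_{t+T}^{-1}=\tfrac1N\textstyle\sum_k w_k\,\Delta y_k\Delta y_k^\trans\succeq\frac{c\rho\,\delta^2}{\eta(\delta)^3}\,I,
\end{align*}
so, using the local linear bound $\eta(\delta)\le L\delta$ afforded by Lipschitz continuity, $\bar{\lambda}(\Lambda_{t+T})\to0$ as $\delta\to0$; in particular the ellipsoid is nondegenerate. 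Meanwhile $\|\hat{\theta}_{t+T}\|_K\ge\|\nabla\cost(r)\|_K-\sqrt{\bar{\lambda}(K)}\,\bar{\lambda}(\Lambda_{t+T})$ stays bounded away from zero because $\nabla\cost(r)\neq0$. Substituting both estimates into the scalar inequality from the first step, it reduces to an upper bound on $\delta$ proportional to $(1-\ubar{\alpha})\|\nabla\cost(r)\|_K$, which holds for all sufficiently small $\delta>0$; fixing such a $\delta$ and taking $T=T_1+N$ completes the argument.

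The hard part, I expect, is the excitation estimate $\tfrac1N\sum_k\Delta y_k\Delta y_k^\trans\succeq\rho\,\delta^2 I$: it is the one place where persistency of the \emph{input} dither must be transferred to persistency of the \emph{output} regressor through the nonlinear closed-loop dynamics. The clean device is Assumption~\ref{assume:plant}\ref{assume:plant-tracking}: since the steady-state map is the identity, to first order each output perturbation equals the corresponding dither perturbation, so the approximately linear near-equilibrium response carries the dither's lower spectral bound to the output with an $O(\delta^2)$ floor. Making this rigorous---ruling out that the closed-loop dynamics filter out excitation directions, and verifying that the $\delta^2$ excitation floor dominates the $\eta(\delta)^3$ settling residual uniformly over the window---is the technical crux; everything else is the generalized-eigenvalue reduction and bookkeeping.
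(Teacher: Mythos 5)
Your reduction of the inclusion to a scalar inequality is sound and, after unwinding your lower bound $\|\hat{\theta}\|_K \geq \|\theta\|_K - \sqrt{\bar{\lambda}(K)}\,\bar{\lambda}(\Lambda)$, lands on essentially the same sufficient condition as the paper, namely $\|K^{1/2}\|\,\|\Lambda\| \leq \tfrac{1-\ubar{\alpha}}{2-\ubar{\alpha}}\|\theta\|_K$ (the paper gets there via Cauchy--Schwarz and the quadratic formula rather than via your triangle-inequality lower bound, but the two are equivalent). The overall architecture --- freeze the reference, let ISS settle the plant, use PE of the dither to lower-bound the information matrix --- is also the paper's.

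The genuine gap is in how you combine the settling bound with the excitation floor. Your chain $\Lambda^{-1} \succeq \tfrac{c\rho\,\delta^2}{\eta(\delta)^3} I$ only forces $\bar{\lambda}(\Lambda)\to 0$ if $\eta(\delta)^3 = o(\delta^2)$, and you secure this by asserting $\eta(\delta)\leq L\delta$ ``afforded by Lipschitz continuity.'' But Assumption~\ref{assume:plant} gives only an \ac{iss} estimate with a class-$\K$ gain $\gamma$; Lipschitz continuity of $f$ and $g$ controls one-step growth, not the asymptotic gain near zero, and nothing rules out, say, $\gamma(\delta)\sim\delta^{1/3}$, in which case $\delta^2/\eta(\delta)^3\sim\delta$ \emph{vanishes} and your information-matrix bound collapses. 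The paper avoids needing any rate on $\gamma$ by a normalization you miss: it states the \ac{pe} condition for the \emph{normalized} regressors, $\tfrac{1}{N}\sum_k \Delta y_k\Delta y_k^\trans/\|\Delta y_k\|^2 \succeq \rho I$ with $\rho$ independent of $\delta$, and then observes that the quantity actually entering the information matrix is $w_k\|\Delta y_k\|^2 \sim \big(\|e_t\|_{\tilde{H}}+\|\Delta y_k\|_{\tilde{H}}\big)^{-1}$, which blows up as soon as $\eta(\delta)$ is merely \emph{small enough} --- no rate required. So only $\gamma(\delta)\to 0$ is used. Your unnormalized floor $\tfrac{1}{N}\sum_k\Delta y_k\Delta y_k^\trans\succeq\rho\delta^2 I$ is also a strictly stronger claim than the paper's (it needs a quantitative \emph{lower} bound on the output response through the nonlinear closed loop, not just that the directions span), which is why you correctly identify it as the crux; the normalized form sidesteps that too. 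Either restate the argument with the paper's normalization, or add an explicit hypothesis of a linear (or at least $o(\delta^{2/3})$) \ac{iss} gain --- as written, the step fails under the stated assumptions.
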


\begin{proof}
For notational simplicity, we will drop the time indices. 

First, we will transform the desired condition $\tilde{\Theta}^{\text{\sc bls}} \subseteq \tilde{\Theta}^{\max}$ into a more readily verifiable form. From the definition~\eqref{eq:robust-set} of $\tilde{\Theta}^{\max}$, the condition $\tilde{\Theta}^{\text{\sc bls}} \subseteq \tilde{\Theta}^{\max}$ holds if and only if $\| \tilde{\theta} \|_K \leq (1-\ubar{\alpha}) \| \hat{\theta} \|_K$ for all $\tilde{\theta} \in \tilde{\Theta}^{\text{\sc bls}}$. Substituting $\hat{\theta} = \theta + \tilde{\theta}$ and expanding the norm $\| \hat{\theta} \|_K^2 = \| \theta + \tilde{\theta} \|_K^2$, we have the following equivalent condition
\begin{align*}
\ubar{\alpha} (2 - \ubar{\alpha}) \| \tilde{\theta} \|_K^2 - 2(1-\ubar{\alpha})^2 \theta^\trans K \tilde{\theta} - (1-\ubar{\alpha})^2 \| \theta \|_K^2 \leq 0
\end{align*}
for all $\tilde{\theta} \in \tilde{\Theta}^{\text{\sc bls}}$ where $1 - (1-\ubar{\alpha})^2 = \ubar{\alpha} (2 - \ubar{\alpha})$. Using the Cauchy-Schwarz inequality $-\theta^\trans K \tilde{\theta} \leq \| \theta \|_K \| \tilde{\theta} \|_K$, we obtain the following conservative condition 
\begin{align*}
\ubar{\alpha} (2 - \ubar{\alpha}) \| \tilde{\theta} \|_K^2 + 2(1-\ubar{\alpha})^2 \| \theta \|_K \| \tilde{\theta} \|_K - (1-\ubar{\alpha})^2 \| \theta \|_K^2 \leq 0
\end{align*}
for all $\tilde{\theta} \in \tilde{\Theta}^{\text{\sc bls}}$. Since this quadratic equation is convex $\ubar{\alpha}(2-\ubar{\alpha})>0$, it is positive between its roots, which can be obtain from the quadratic formula
\begin{align*}
-\frac{1-\ubar\alpha}{\ubar{\alpha}} \| \theta \|_K \leq \| \tilde{\theta} \|_K \leq \frac{1-\ubar\alpha}{2-\ubar{\alpha}} \| \theta \|_K
\end{align*}
for all $\tilde{\theta} \in \tilde{\Theta}^{\text{\sc bls}}$. Since $\ubar{\alpha} \in (0,1)$ and $\| \tilde{\theta} \|_K \geq 0$, the lower-bound is redundant. Thus, $\tilde{\Theta}^{\text{\sc bls}} \subseteq \tilde{\Theta}^{\max}$ if $\| \tilde{\theta} \|_K \leq \frac{1-\ubar\alpha}{2-\ubar{\alpha}} \| \theta \|_K$ for all $\tilde{\theta} \in \tilde{\Theta}^{\text{\sc bls}}$. For $\tilde{\theta} \in \tilde{\Theta}^{\text{\sc bls}}$, the norm $\| \tilde{\theta} \|_K$ satisfies
\begin{align*}
\| \tilde{\theta} \|_K = \| K^{\frac{1}{2}} \tilde{\theta} \| 
&\leq \| K^{\frac{1}{2}} \| \| \tilde{\theta} \| =  \| K^{\frac{1}{2}} \| \| \Lambda \Lambda^{-1} \tilde{\theta} \| \\
&\leq  \| K^{\frac{1}{2}} \| \| \Lambda \| \| \Lambda^{-1} \tilde{\theta} \| \leq  \| K^{\frac{1}{2}} \| \| \Lambda \|
\end{align*}
where $\| \Lambda^{-1} \tilde{\theta} \|  \leq 1$ by the definition~\eqref{eq:estimation-error} of the set $\tilde{\Theta}^{\text{\sc bls}}$. Thus, $$\tilde{\Theta}^{\text{\sc bls}} \subseteq \tilde{\Theta}^{\max} \;\; \text{ if } \;\;  \| K^{\frac{1}{2}} \| \| \Lambda \| \leq \frac{1-\ubar\alpha}{2-\ubar{\alpha}} \| \theta \|_K$$. Or equivalently, $\tilde{\Theta}^{\text{\sc bls}} \subseteq \tilde{\Theta}^{\max}$ when the information matrix~\eqref{eq:estimator-information} is sufficiently large 
\begin{align}
\label{pf:information-bound}
\Lambda^{-1} \succeq \frac{ 2 - \ubar{\alpha} }{ 1-\ubar{\alpha} } \frac{  \| K^{\frac{1}{2}} \| }{ \| \theta \|_K } I
\end{align}
where $\| \theta \|_K = \| \nabla \cost \|_K \neq 0$ by the hypothesis of this lemma. In other words, the \ac{esc} controller leaves the exploration mode when there is enough~\eqref{pf:information-bound} information to reliably estimate the gradient. Next, we prove that the information matrix~\eqref{eq:estimator-information} is sufficiently large~\eqref{pf:information-bound} after a finite period of time $T < \infty$ in the exploration mode and non-zero dither amplitude $\delta >0$. Consider the following two conditions:
\begin{enumerate}
\item The weightings~\eqref{eq:weighting} are sufficiently large
\begin{align}
\label{pf:weighting-bounds}
w_k \geq \frac{ 2 - \ubar{\alpha} }{ 1-\ubar{\alpha} } \frac{  \| K^{\frac{1}{2}} \| }{ \| \theta \|_K }\frac{1}{\rho \| \Delta y_k \|^2} 
\end{align}
\item The output transients $\Delta  y_k$ are \ac{pe}
\begin{align}
\label{pf:output-pe}
\frac{1}{N} \sum_{k=t-N}^t \frac{\Delta y_k \Delta y_k^\trans}{\| \Delta y_k \|^2} \succeq \rho I
\end{align}
\end{enumerate}
where $\rho > 0$ quantifies the excitement of the outputs $\Delta  y_k$. If conditions~\eqref{pf:weighting-bounds} and~\eqref{pf:output-pe} hold then the information matrix~\eqref{eq:estimator-information} is sufficiently large~\eqref{pf:information-bound} to allow the controller~\eqref{eq:controller} to enter the exploitation mode since 
\begin{align*}
\Lambda_t^{-1} 
&= \frac{1}{N} \sum_{k=t-1}^{t-N} w_k \Delta y_k \Delta y_k^\trans \\
&\succeq \frac{1}{N} \sum_{k=t-1}^{t-N} \frac{ 2 - \ubar{\alpha} }{ 1-\ubar{\alpha} } \frac{  \| K^{\frac{1}{2}} \| }{ \| \theta \|_K } \frac{\Delta y_k \Delta y_k^\trans}{\rho \| \Delta y_k\|^2} 
\succeq \frac{ 2 - \ubar{\alpha} }{ 1-\ubar{\alpha} } \frac{  \| K^{\frac{1}{2}} \| }{ \| \theta \|_K } I. 
\end{align*} 

Thus, we next prove condition~\eqref{pf:weighting-bounds} holds after a finite-time $T_0 < \infty$ and non-zero dither amplitude $\delta >0$. According to Corollary~\ref{cor:exploration}, the equilibrium state $\bar{x} = \pi(\bar{r})$ is \ac{iss} where the reference is constant $r_t = \bar{r}$ when the controller~\eqref{eq:controller} is in the exploration mode. By the definition of \ac{iss}, we have 
\begin{align*}
\| x_t - \bar{x} \| \leq \beta( \| x_0 - \bar{x} \|, t) + \gamma(\sup_{t} \| d_t \|)
\end{align*}
where $\beta \in \KL$ and $\gamma\in\K$. Thus, for any initial condition, there exists a finite-time $T_0 < \infty$ and non-zero dither amplitude $\delta >0$ such that 
\begin{align*}
\| x_t - \bar{x} \| 
\leq \beta( \| x_0 - \bar{x} \|, T_0) + \gamma( \delta ) 
\leq   \frac{\rho}{\ell_g \| \tilde{H} \|}  \frac{ 2 - \ubar{\alpha} }{ 1-\ubar{\alpha} } \frac{  \| K^{\frac{1}{2}} \| }{ \| \theta \|_K }.
\end{align*}
for all $t \geq T_0 \in \naturals$ and $\| d_t \| \leq \delta$ where $\ell_g$ is a Lipschitz bound on $g$. Since the plant~\eqref{eq:plant} output map $g$ is Lipschitz continuous, we have the following bound on the tracking error $e_t$
\begin{subequations}
\label{pf:iss-bounds}
\begin{align}
\| e_t \| 
&= \| y_t - r_t \| = \| g(x_t) - g(\bar{x}) \| \leq \ell_g \| x_t - \bar{x} \| \\
&\leq  \frac{\rho}{\| \tilde{H} \|} \frac{ 2 - \ubar{\alpha} }{ 1-\ubar{\alpha} } \frac{  \| K^{\frac{1}{2}} \| }{ \| \theta \|_K }.
\end{align}
Likewise, we can obtain a bound on the output transients $y_k - y_{k-1}$ 
\begin{align}
\| \Delta y_k \| &= \| g(x_k) - g(\bar{x}) + g(\bar{x}) - g(x_{k-1}) \| \\
& \leq \frac{2\rho}{\| \tilde{H} \|} \frac{ 2 - \ubar{\alpha} }{ 1-\ubar{\alpha} } \frac{  \| K^{\frac{1}{2}} \| }{ \| \theta \|_K }.
\notag
\end{align}
\end{subequations}
Substituting the bounds~\eqref{pf:iss-bounds} into the weightings~\eqref{eq:weighting} produces
\begin{align*}
w_k^{-1} 
&= \frac{1}{2} \| \Delta y_k \| \| \Delta y_k \|_{\tilde{H}}  \Big( \| e_t \|_{\tilde{H}} + \frac{1}{2} \| \Delta y_k \|_{\tilde{H}} \Big)  \\
&= \frac{ 2 - \ubar{\alpha} }{ 1-\ubar{\alpha} } \frac{  \| K^{\frac{1}{2}} \| }{ \| \theta \|_K } \rho \| \Delta y_k \|^2
\end{align*}
Thus, condition~\eqref{pf:weighting-bounds} holds after finite-time $T_0 < \infty$ and for a non-zero dither amplitude $\delta >0$. 

Next, we prove condition~\eqref{pf:output-pe} holds for a \ac{pe} dither $d_t$. Since the plant~\eqref{eq:plant} is controllable, a \ac{pe} dither $d_t$ will produce a \ac{pe} state $x_t$ after a finite-period $n_c$ where $n_c$ is the controllability index. Likewise, since the plant~\eqref{eq:plant} is observable, the \ac{pe} state will produce a \ac{pe} output sequence $\Delta y_k$ after a finite-period $n_o$ where $n_o$ is the observability index. Thus, condition~\eqref{pf:output-pe} holds for some $\rho$ after a finite period $n_c + n_o$. Note that the \ac{pe} parameter $\rho$ does not depend on the dither amplitude $\delta$ since the data $\{ \Delta y_k \}_{k=t+n_c + n_o}^{t+n_c+n_o+N}$ is normalized in~\eqref{pf:output-pe}. 

Thus, we have proven that conditions~\eqref{pf:weighting-bounds} and~\eqref{pf:output-pe} hold after a finite period $T = \max\{T_0,n_c + n_o\}$ for a \ac{pe} and sufficiently small $\| d_t \| \leq \delta$ dither $d_t$. This means that the information matrix~\eqref{eq:estimator-information} is sufficiently large~\eqref{pf:information-bound}. Therefore, $\tilde{\Theta}^{\text{\sc bls}} \subseteq \tilde{\Theta}^{\max}$ allowing the controller~\eqref{eq:controller} to reenter the exploitation mode. 
\end{proof}

Lemma~\ref{lemma:time-bound} shows that \ac{pe} data $\{ \cost_k, y_k \}_{k=t-N}^t$ is necessary, but not sufficient to accurately estimate the cost gradient $\nabla\cost(r_t)$. For an accurate estimates, the data must also be sufficiently local $y_t \approx r_t$ and sufficiently close to equilibrium $y_t \approx y_{t-1}$. For instance, data $\{ \cost_k, y_k \}_{k=t-N}^t$ collected far $\| y_k - r_t \| \gg 0$ from the set-point cannot be used to accurately estimate the gradient $\nabla \cost(r_t)$ at $r_t$ since the cost is nonlinear $\ubar{H} \preceq \nabla^2 \cost \preceq \bar{H}$. Other \ac{esc} controllers (e.g.~\cite{Guay2015, Ariyur2003}) address this issue by reducing the bandwidth of the \ac{esc} controller. In contrast, our \ac{esc} controller only reduces the bandwidth when it detects $\alpha_t^\star = 0$ that the gradient cannot be reliable estimated. Thus, we say our \ac{esc} controller has an adaptive separation of time-scales since the adaptive step-size~\eqref{eq:step-size} throttles the \ac{esc} controller~\eqref{eq:controller} to allow the plant~\eqref{eq:plant} settle providing better data for the gradient estimator~\eqref{eq:estimator}.

The final result necessary for the proof of Theorem~\ref{thm:main} connects the game-theoretic step-size~\eqref{eq:step-size-generic} used in Corollary~\ref{cor:exploration} with the step-size~\eqref{eq:step-size} used by the \ac{esc} controller~\eqref{eq:controller}. 

\begin{corollary}
\label{cor:step-size}
Let Assumptions~\ref{assume:plant} and~\ref{assume:cost} hold. For the estimation error set~\eqref{eq:estimation-error}, the optimal step-size~\eqref{eq:step-size-generic} has the closed-form~\eqref{eq:step-size}.
\end{corollary}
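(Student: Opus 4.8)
The plan is to note that once the abstract gradient-error set in~\eqref{eq:step-size-generic} is specialized to the \ac{bls} bound $\tilde{\Theta}_t = \tilde{\Theta}_t^{\text{\sc bls}}$ of Lemma~\ref{lemma:estimation-error}, the only quantity left to evaluate is the inner maximization $\max_{\tilde{\theta} \in \tilde{\Theta}_t^{\text{\sc bls}}} \tilde{\theta}^\trans K \hat{\theta}_t$. Since $\tilde{\Theta}_t^{\text{\sc bls}}$ is an origin-centered ellipsoid and $\tilde{\theta}^\trans K \hat{\theta}_t$ is linear in $\tilde{\theta}$, this is precisely the support function of the ellipsoid evaluated in the direction $c := K\hat{\theta}_t$, a problem with a well-known closed form. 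The outer operator $\max\{0,\cdot\}$ is common to both~\eqref{eq:step-size-generic} and~\eqref{eq:step-size}, so it transfers verbatim and the entire corollary reduces to evaluating this one support function.

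To carry this out I would write the defining inequality of~\eqref{eq:estimation-error} as a quadratic form $\tilde{\theta}^\trans M \tilde{\theta} \le 1$ with symmetric shape matrix $M$ read off from $\Lambda_t$, and substitute $z = M^{1/2}\tilde{\theta}$ to map the ellipsoid onto the unit ball $\|z\|\le 1$. The objective becomes the linear functional $z^\trans M^{-1/2} c$, whose maximum over $\|z\|\le 1$ is $\|M^{-1/2} c\|$ by the Cauchy--Schwarz inequality, attained at $z^\star = M^{-1/2}c/\|M^{-1/2}c\|$; equivalently one invokes the identity $\sup_{\tilde{\theta}^\trans M \tilde{\theta}\le 1}\tilde{\theta}^\trans c = \sqrt{c^\trans M^{-1} c}$, or a one-line Lagrange-multiplier argument. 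Evaluated in the direction $c = K\hat{\theta}_t$ this weighted norm is exactly the numerator $\|\Lambda_t^{1/2} K\hat{\theta}_t\|$ of~\eqref{eq:step-size}, so substituting back into~\eqref{eq:step-size-generic} gives $\alpha_t^\star = \max\{0,\, 1 - \|\Lambda_t^{1/2}K\hat{\theta}_t\|/\|\hat{\theta}_t\|_K^2\}$, which is~\eqref{eq:step-size}. As a sanity check, the maximizer $\tilde{\theta}^\star$ is the same worst-case error flagged in Corollary~\ref{cor:exploitation} that amplifies rather than rotates the descent direction $K\hat{\theta}_t$.

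The step I expect to demand the most care is the bookkeeping of the fractional powers of $\Lambda_t$ threaded through the change of variables: the shape matrix $M$, its root $M^{1/2}$, and the inverse root $M^{-1/2}$ in the support function must all be tracked precisely so that the weight multiplying $K\hat{\theta}_t$ emerges as $\Lambda_t^{1/2}$ rather than some other power. A secondary subtlety is the degeneracy noted after Lemma~\ref{lemma:estimation-error}: if the output data are not persistently exciting then $\Lambda_t^{-1}$ is rank-deficient, $M^{1/2}$ is only positive semidefinite, and the ellipsoid is unbounded along its null space. I would handle this by restricting the closed-form identity to the exploitation regime, where Lemma~\ref{lemma:time-bound} ensures the ellipsoid is nondegenerate, and by observing that $\sqrt{c^\trans M^{-1} c}$ is continuous in $\Lambda_t$, so the formula extends by continuity, with the outer $\max\{0,\cdot\}$ simply returning $\alpha_t^\star = 0$ whenever the numerator exceeds the denominator.
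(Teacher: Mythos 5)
Your proposal is correct and takes essentially the same route as the paper: the paper's proof performs the identical change of variables mapping the ellipsoid~\eqref{eq:estimation-error} onto the unit ball and maximizes the resulting linear functional at the aligned unit vector, i.e.\ exactly your support-function/Cauchy--Schwarz argument. Your flagged worry about the fractional powers of $\Lambda_t$ is well placed: read literally, $\big\|\Lambda_t^{-1}\tilde{\theta}\big\|^2\le 1$ corresponds to shape matrix $M=\Lambda_t^{-2}$ and would yield $\|\Lambda_t K\hat{\theta}_t\|$, whereas the paper's proof uses the substitution $z=\Lambda_t^{-1/2}\tilde{\theta}$ (implicitly taking $M=\Lambda_t^{-1}$) to arrive at the $\|\Lambda_t^{1/2}K\hat{\theta}_t\|$ appearing in~\eqref{eq:step-size}.
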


\begin{proof}
For the estimation error set~\eqref{eq:estimation-error}, the optimization problem~\eqref{eq:step-size-generic} used to select the adaptive step-size $\alpha_t^\star$ can be reformulated as
\begin{align}
\label{eq:step-size-optimization}
\max_{\tilde{\theta} \in \tilde{\Theta}_t^{\text{\sc bls}}} ~ \tilde{\theta}^\trans K \hat{\theta} 
= 
\max_{\| z \|^2 \leq 1}   z^\trans \Lambda_t^{\frac{1}{2}} K \hat{\theta}_t 
\end{align}
where $z = \Lambda_t^{-1/2}\tilde{\theta}$ is a change-of-variables. The optimization problem~\eqref{eq:step-size-optimization} has a closed-form solution, namely $$z^\star = \pm \Lambda_t^{\frac{1}{2}} K \hat{\theta}_t / \| \Lambda_t^{\frac{1}{2}} K \hat{\theta}_t \|$$ i.e. the unit vector $\| z^\star \|\leq 1$ aligned with the cost $\Lambda_t^{\frac{1}{2}} K\hat{\theta}_t$. Substituting $z^\star$ into~\eqref{eq:step-size-generic} yields~\eqref{eq:step-size}. 
\end{proof}

Corollary~\ref{cor:step-size} shows that the step-size~\eqref{eq:step-size} used by the \ac{esc} controller~\eqref{eq:controller} is closed-form solution of the game-theoretic optimal step-size~\eqref{eq:step-size-generic} for the particular bounds~\eqref{eq:estimation-error} on the gradient estimation errors of the \ac{bls} estimator~\eqref{eq:estimator}.

Finally, we can prove Theorem~\ref{thm:main}. 

\begin{proof}[Proof of Theorem~\ref{thm:main}]
Since the \ac{esc} controller~\eqref{eq:controller} switches between the exploration and exploitation modes, we will use switched systems theory to prove stability. 

Let $t_i$ for $i \in \naturals$ denote the time-indices where the \ac{esc} controller~\eqref{eq:controller} switches modes. Without loss of generality, assume that the system is in the exploration mode at even time-indices $t_{2k}$ for $k \in \naturals$. From Corollaries~\ref{cor:exploitation} and~\ref{cor:exploration}, the common Lyapunov function~\eqref{eq:lyapunov} satisfies
\begin{align}
\label{eq:iss-switching}
V(\tilde{x}_{2k+2},\tilde{r}_{2k+2}) &-  V(\tilde{x}_{2k},\tilde{r}_{2k}) \\
& \leq - q(\| \tilde{x}_{2k}, \tilde{r}_{2k} \|) + \sigma_s(\|d_{2k},\dots,d_{2k+1}\|) \notag
\end{align}
where $\tilde{x}_{2k} = \tilde{x}_{t_{2k}}$ and $\tilde{r}_{2k} = \tilde{r}_{t_{2k}}$ and $q$ is obtained by summing $q_x$ and $q_r$ along the closed-loop trajectories
\begin{align*}
q(\| \tilde{x}_{2k}, \tilde{r}_{2k} \|) \!=\! \sum_{t=t_{2k}}^{t_{2k+1}} q_x^1( \| \tilde{x}_t\| ) \!+\! \sum_{t=t_{2k+1}}^{t_{2k+2}} q_x^2( \| \tilde{x}_t\| ) \!+\! q_r^2( \| \tilde{r}_t\| ).
\end{align*}
where $q$ is a class-$\mathcal{K}_\infty$ function of the states $( \tilde{x}_{2k},\tilde{r}_{2k})$ at the $2k$ switching instance $t_{2k}$. Likewise, the summation $$\sigma_s = \sum_{t=t_{2k}}^{t_{2k+2}} \sigma(\| d_t\|)$$ is a class-$\K$ function of the dither $d_t$ between times $t_{2k}$ and $t_{2k+2}$. Thus, by~\eqref{eq:iss-switching} and Proposition~2.3 in~\cite{Jiang2001}, there exists $\beta \in \K_\infty$ and $\gamma \in \K$ such that 
\begin{align*}
\big\| \tilde{x}_{2k}, \tilde{r}_{2k} \big\| 
\leq 
\beta\left( \big\|  \tilde{x}_{0}, \tilde{r}_{0} \big\| | , 2k \right) +  \gamma( \delta )
\end{align*}
Therefore, the state $(x,r)$ of the closed-loop system~\eqref{eq:plant} and~\eqref{eq:controller}-\eqref{eq:step-size} converges to a neighborhood of the optimal equilibrium~\eqref{eq:equilibrium} as the \textit{switching index} $k$ goes to infinity $k \rightarrow \infty$. Thus, we need to prove that switching index goes to infinity $k \rightarrow \infty$ as time goes to infinity $t \rightarrow \infty$ i.e. we do not become trapped in the exploration mode.

According to Lemma~\ref{lemma:time-bound},  if $k \not\rightarrow \infty$ then $t \not\rightarrow \infty$ since $t \leq \sum_{k=0}^{\sup k<\infty} T_k < \infty$ where $T_k < \infty$. Thus, by the contrapositive $k \rightarrow \infty$ as $t \rightarrow \infty$. Therefore, the optimal equilibrium~\eqref{eq:equilibrium} of the closed-loop system~\eqref{eq:plant} and~\eqref{eq:controller}-\eqref{eq:step-size} is \ac{iss}. 
\end{proof}

\section{Numerical Examples}
\label{sec:examples}

In this section, we demonstrate our \ac{esc} controller through a series of numerical examples. 

\subsection{Illustrative Example}

In this section, we demonstrate our \ac{esc} controller for a simple linear system with an unknown quadratic cost function. The purpose of this example is to illustrate our \ac{esc} controller~\eqref{eq:controller}-\eqref{eq:step-size} using classical control theory. 

The plant~\eqref{eq:plant} is an under-damped second-order linear system
\begin{align}
\label{eq:illustrative-plant}
\ddot{y} + 2 \zeta \omega_n \dot{y} + \omega_n^2 y = \omega_n^2 u
\end{align}
where $\zeta = 0.1$ and $\omega_n = 1.0$. The cost $\cost$ is a quadratic
\begin{align}
\label{eq:illustrative-cost}
\cost(y) = \tfrac{1}{2} H (y-y^\star)^2
\end{align}
where $y^\star$ is the optimal and $H \in \reals$ is the Hessian. 

\begin{figure}[ht]
\begin{center}
\includegraphics[width = 0.8\columnwidth]{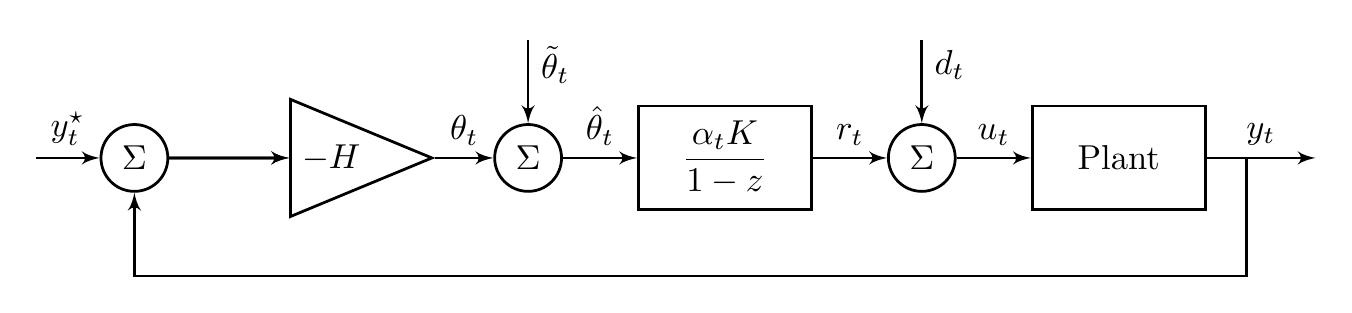}
\end{center}
\caption{The feedback-loop created by descending the gradient $H(y-y^\star)$ of the quadratic cost function~\eqref{eq:illustrative-cost}.}
\label{fig:illustrative-feedback}
\end{figure}

First, we consider the ideal case where the gradient $\nabla \cost(y) = H (y-y^\star)$ has been perfectly estimated $\tilde{\theta}_t=0$. Since the gradient $\nabla \cost(y) = H (y-y^\star)$ of the quadratic cost~\eqref{eq:illustrative-cost} is a linear function of the output $y$, we obtain the feedback-loop shown in Figure~\ref{fig:illustrative-feedback}. Even though the gradient is perfectly estimated $\tilde{\theta}_t=0$ and the plant~\eqref{eq:illustrative-plant} is open-loop stable, the integral-action of the \ac{esc} controller can destabilize the closed-loop system, as shown by the root-locus in Figure~\ref{fig:root-locus-1}. In particular, the Newton-step controller gain $\alpha_t K = H^{-1}$, which provides one-step convergence to the optimal for static optimization, destabilizes this dynamic optimization, as shown by root-locus in Figure~\ref{fig:root-locus-1}. In contrast, the proposed controller gain~\eqref{eq:esc-gain} provides closed-loop stability for perfect gradient estimates. 

\begin{figure}[!ht]
\centering\subfloat[Cost]{
\includegraphics[width = 0.49\columnwidth]{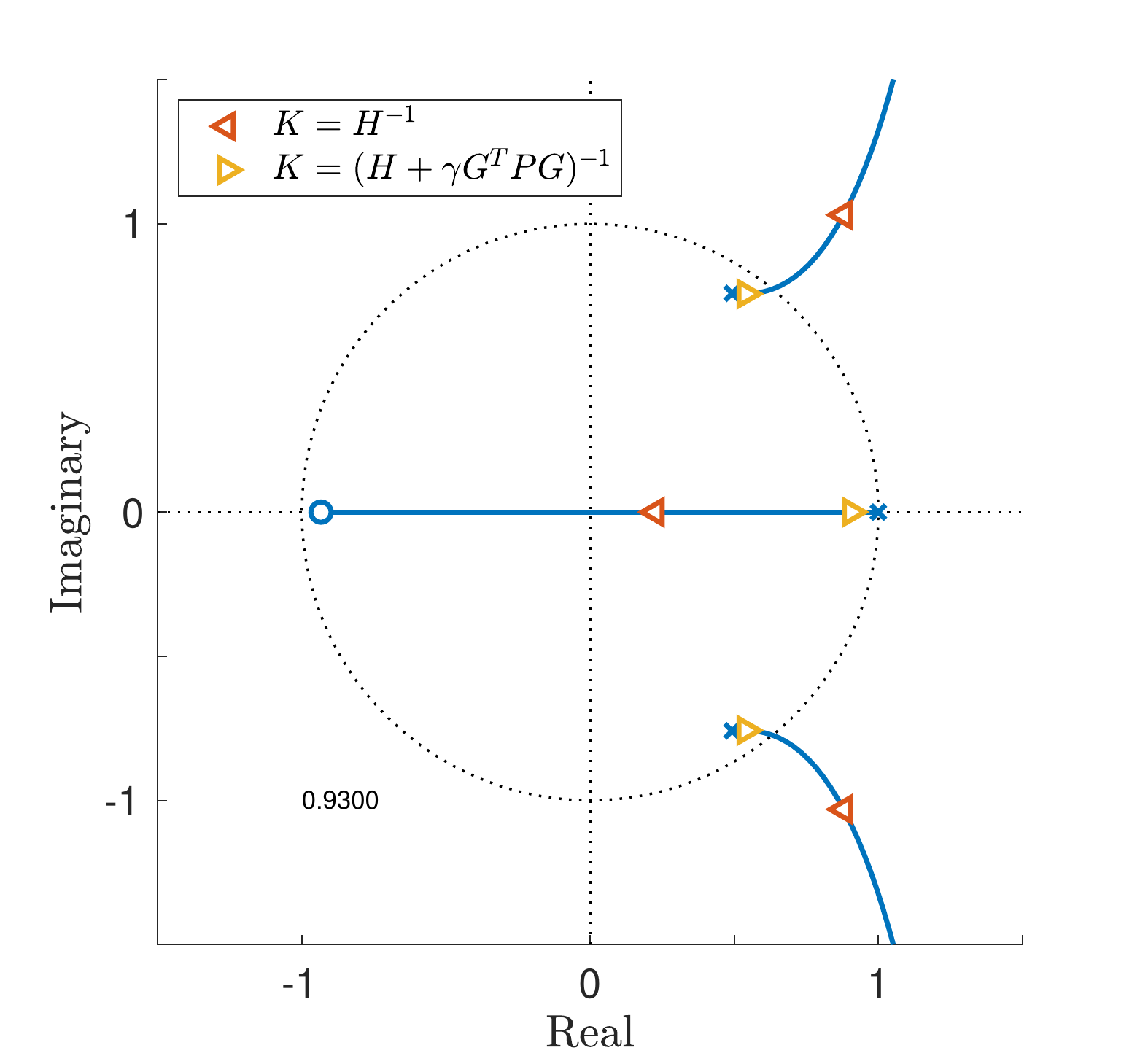}
\label{fig:root-locus-1}
} \subfloat[Step-Size]{
\includegraphics[width = 0.49\columnwidth]{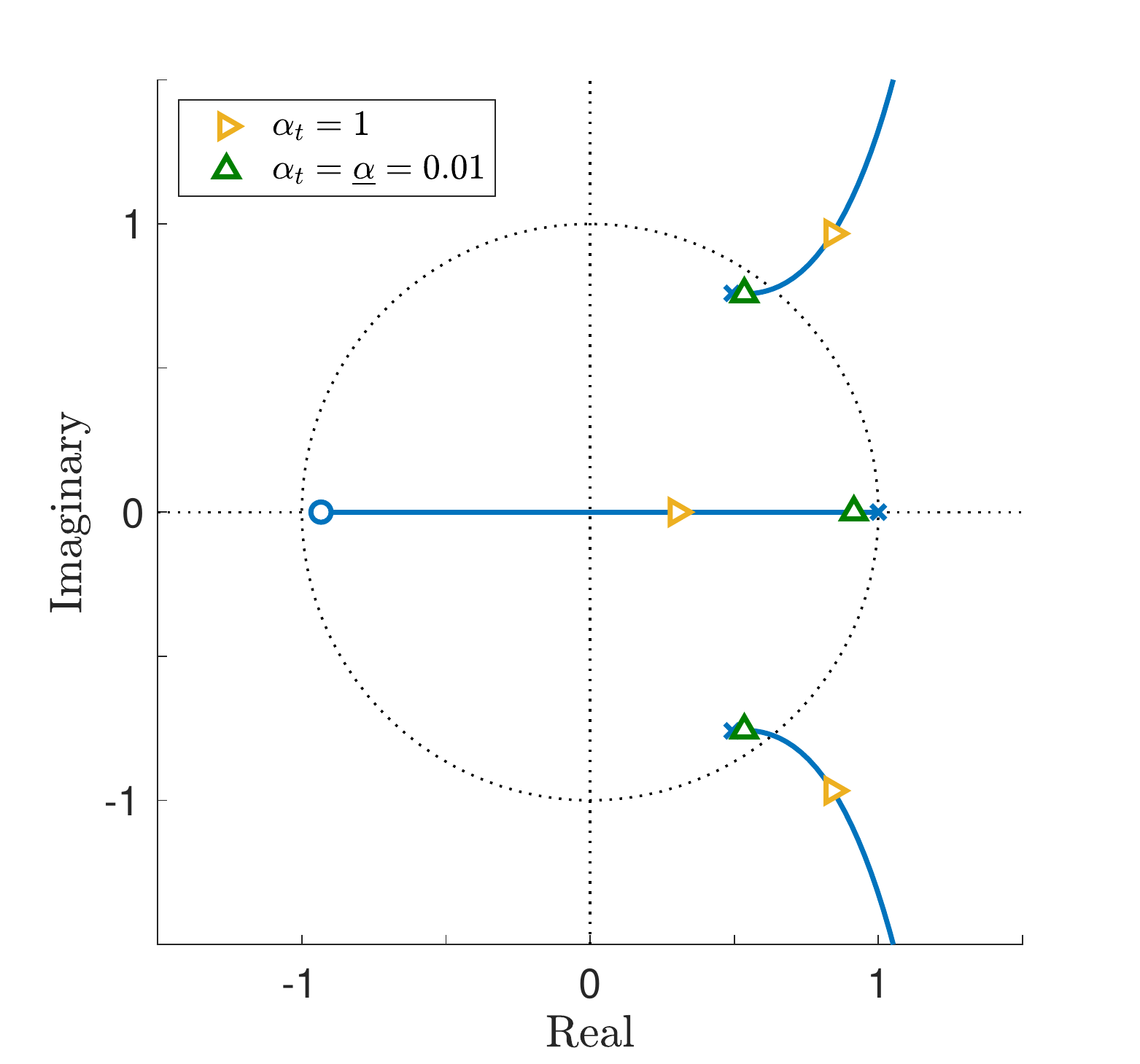}
\label{fig:root-locus-2}
}
\caption{Root-locus plots demonstrating that the integral-action of the \ac{esc} controller~\eqref{eq:controller} can destabilize the linear plant~\eqref{eq:illustrative-plant}. Note that these are discrete-time root-locus plots.}
\label{fig:root-locus}
\end{figure}

Unfortunately, our instability issues re-emerge when we consider imperfect gradient estimates $\hat{\theta}_t \neq \nabla \cost_t$. In particular, the worst-case gradient estimation error $$\tilde{\theta}_t^\star = \tfrac{1-\ubar{\alpha}}{\ubar{\alpha}} \nabla \cost_t \in \tilde{\Theta}_t = \tilde{\Theta}_t^{\max}$$ amplifies the feedback caused by the gradient $$\hat{\theta}_t = \nabla \cost_t + \tilde{\theta}_t = \frac{1}{\ubar{\alpha}} \nabla \cost_t = \frac{H}{\ubar{\alpha}}(y-y^\star)$$ where $\ubar{\alpha} \ll 1$. This increases the loop-gain, leading to instability, as shown by the root-locus in Figure~\ref{fig:root-locus-2}. Fortunately, our adaptive step-size~\eqref{eq:step-size} will compensate $\alpha_t^\star = \ubar{\alpha}$ for the expansion $\tfrac{1}{\ubar{\alpha}}$ of the loop-gain to restore stability, as shown by the root-locus in Figure~\ref{fig:root-locus-2}.

\begin{figure}[ht]
\centering
\subfloat[Cost]{
\includegraphics[width = 0.49\columnwidth]{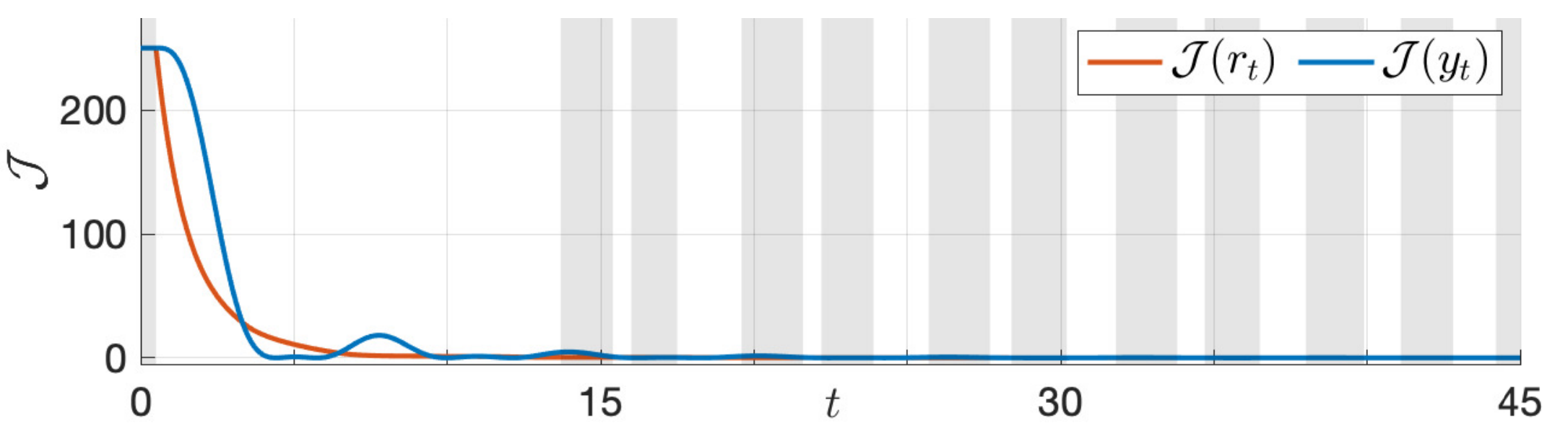}
\label{fig:illustrative-cost}
}
\subfloat[Step-Size]{
\includegraphics[width = 0.49\columnwidth]{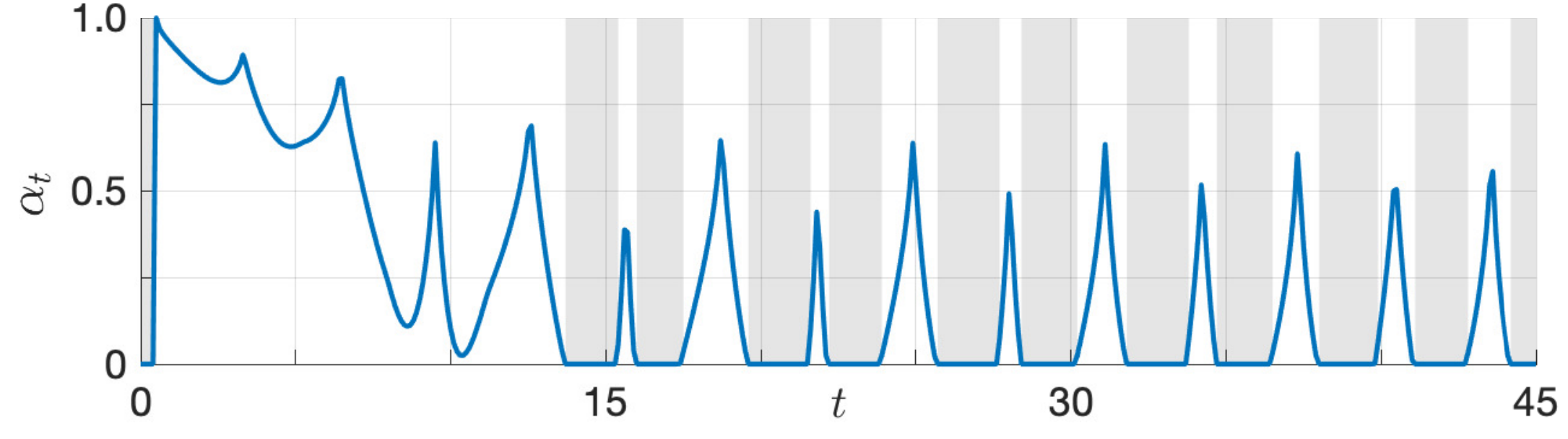}
\label{fig:illustrative-step}
} \\
\subfloat[Output]{
\includegraphics[width = 0.49\columnwidth]{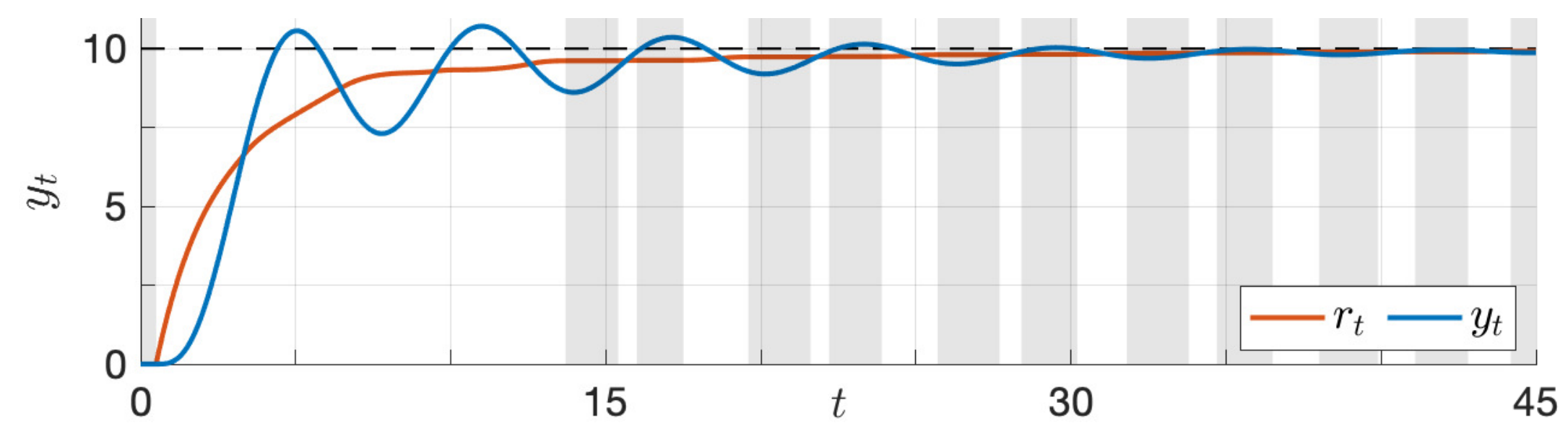}
\label{fig:illustrative-output}
}
\subfloat[Error]{
\includegraphics[width = 0.49\columnwidth]{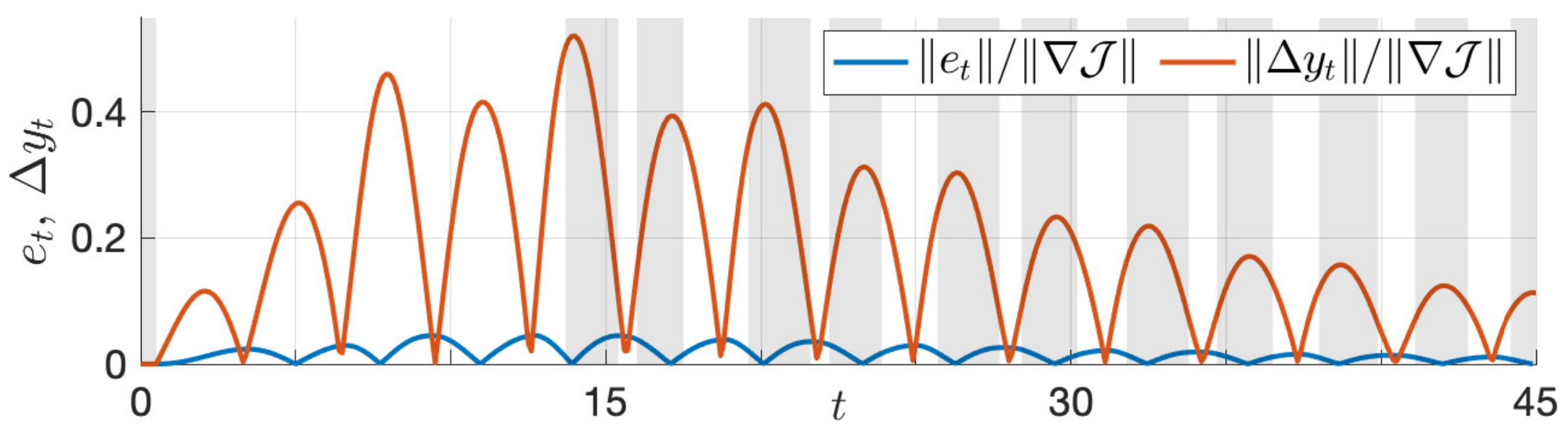}
\label{fig:illustrative-errors}
} 
\caption{Closed-loop ~\eqref{eq:illustrative-plant} and~\eqref{eq:controller}-\eqref{eq:step-size} simulation results. Shaded regions indicate when the \ac{esc} controller is in the exploration mode. }
\label{fig:illustrative}
\end{figure}

Finally, we demonstrate our \ac{esc} controller for the plant~\eqref{eq:illustrative-plant} and cost~\eqref{eq:illustrative-cost}. The \ac{bls} estimator~\eqref{eq:estimator} has a batch horizon $N=5$ and bounds $\ubar{H} = 0$ and $\bar{H} = 10$ on the actual gradient $H = 5$. A dither $d_t = 0.001\sin(t)$ was used to provide persistency of excitation. 

Simulations for the closed-loop system~\eqref{eq:illustrative-plant} and~\eqref{eq:controller}-\eqref{eq:step-size} are shown in Figure~\ref{fig:illustrative}. As shown in Figure~\ref{fig:illustrative-output}, the plant output converges $y_t \rightarrow y^\star$ to the optimal $y^\star = 10$. Since the plant~\eqref{eq:illustrative-plant} is under-damped, the output $y_t$ oscillates and the measured cost $\cost(y_t)$ converges non-monotonically to the optimal value. However, the set-point cost $\cost(r_t)$ is monotonically decreasing as shown in Figure~\ref{fig:illustrative-cost}. Figure~\ref{fig:illustrative-step} shows the step-size~\eqref{eq:step-size}. Initially, the step-size~\eqref{eq:step-size} is large since the gradient $\nabla \cost = H(y-y^\star)$ is large far from the optimal $| y - y^\star | \gg 0$. Thus, an accurate gradient estimate is not required to confidently descend. When the gradient becomes small, the controller often enters the exploration mode, indicated by the shaded regions in Figure~\ref{fig:illustrative}. As shown in Figure~\ref{fig:illustrative-errors}, the periods when the \ac{esc} controller is in the exploration mode $\alpha_t =0$ correspond to periods when the plant is far from equilibrium $\| \Delta y_t \| / \| \nabla \cost \| \gg 0$. 

\subsection{Practical Example: Drone Leak Inspection}

In this section, we apply our \ac{esc} controller~\eqref{eq:controller}-\eqref{eq:step-size} to the problem of an autonomous drone searching for the source of an airborne pollutant leak. 

The plant dynamics~\eqref{eq:plant} model the closed-loop planar motion of a quadrotor drone. We use a standard model of the quadrotor dynamics e.g.~\cite{Bresciani2008}. For simplicity, we only consider the movement of the drone in the plane i.e. the vertical position and orientation dynamics are ignored. The quadrotor is equip with {\sc gps} that measures its planar location $y \in \reals^2$ and an integrated controller that moves the drone to a commanded location $y_t \rightarrow r \in \reals^2$. Thus, the plant satisfies Assumption~\ref{assume:plant}. 

The objective of the \ac{esc} controller~\eqref{eq:controller}-\eqref{eq:step-size} is to move the drone to the source of a pollutant leak. The cost function $\cost(y)$ optimized by the \ac{esc} controller~\eqref{eq:controller} is the location $y$ dependent measured concentration of pollutant in the air. Since our \ac{esc} controller minimize the cost function, we will consider the negative pollutant concentration. The negative pollutant concentration is modeled using a Gaussian plume model~\cite{Beychok1994}
\begin{subequations}
\label{eq:drone-cost}
\begin{align}
\cost(y) =- \frac{1}{\sqrt{2 \pi \sigma}} \exp\Big( - \tfrac{1}{2} (y-y^\star)^\trans \Sigma^\dagger(y-y^\star)  \Big) 
\end{align}
where $y^\star = [200,100]^\trans$ meters is the planar location of the leak and $\Sigma^\dagger(y-y^\star)$ is the pseudo-inverse of the covariance of pollutant concentration~\cite{Beychok1994}
\begin{align}
\Sigma(y-y^\star) = 
\begin{cases}
\sigma^2 (I - d d^\trans ) & \text{ if } d^\trans (y - y^\star) \geq 0 \\
\sigma_0^2 I & \text{ otherwise} 
\end{cases}
\end{align}
\end{subequations}
where $v = 10$ meters/second is the wind velocity (about $20$ knots) and $d = [\cos(-\pi/4),\sin(-\pi/4)]^\trans$ is the wind direction. The cost~\eqref{eq:drone-cost} says that the pollutant concentration has a Gaussian distribution in the cross-wind direction $(I-dd^\trans)(y - y^\star)$. Note that the matrix $(I-dd^\trans) = (I-dd^\trans)^2$ is idempotent. The covariance $\sigma = \sigma_0 + d^\trans (y-y^\star)/2$ of this Gaussian grows linearly with the distance $d^\trans (y-y^\star)$ along the wind-direction $d$ from the source $y^\star$. In the anti-wind direction $d^\trans (y - y^\star) < 0$, the covariance is constant $\Sigma = \sigma_0^2 I$. Note that although the cost~\eqref{eq:drone-cost} is not convex, it locally satisfy Assumption~\ref{assume:cost}. 

Our discrete-time \ac{esc} controller~\eqref{eq:controller}-\eqref{eq:step-size} is executed at a rate of $20$ Hertz. The \ac{bls} estimator~\eqref{eq:estimator} estimates the gradient $\nabla \cost$ from the past $1$ second of data, thus $N=20$. The estimator uses the bounds $\ubar{H} = -0.3 I$ and $\bar{H} = 0.15I$ on the curvature $\nabla^2 \cost$, which is approximately $5 \times$ the actual curvature bounds. The controller gain~\eqref{eq:esc-gain} is $K = I$. The dither $d_t \sim \mathcal{N}(0,1)$ is a normal distributed random variable with covariance of $1$ meter.

\begin{figure}[!ht]
\centering
\includegraphics[width = 0.4\columnwidth]{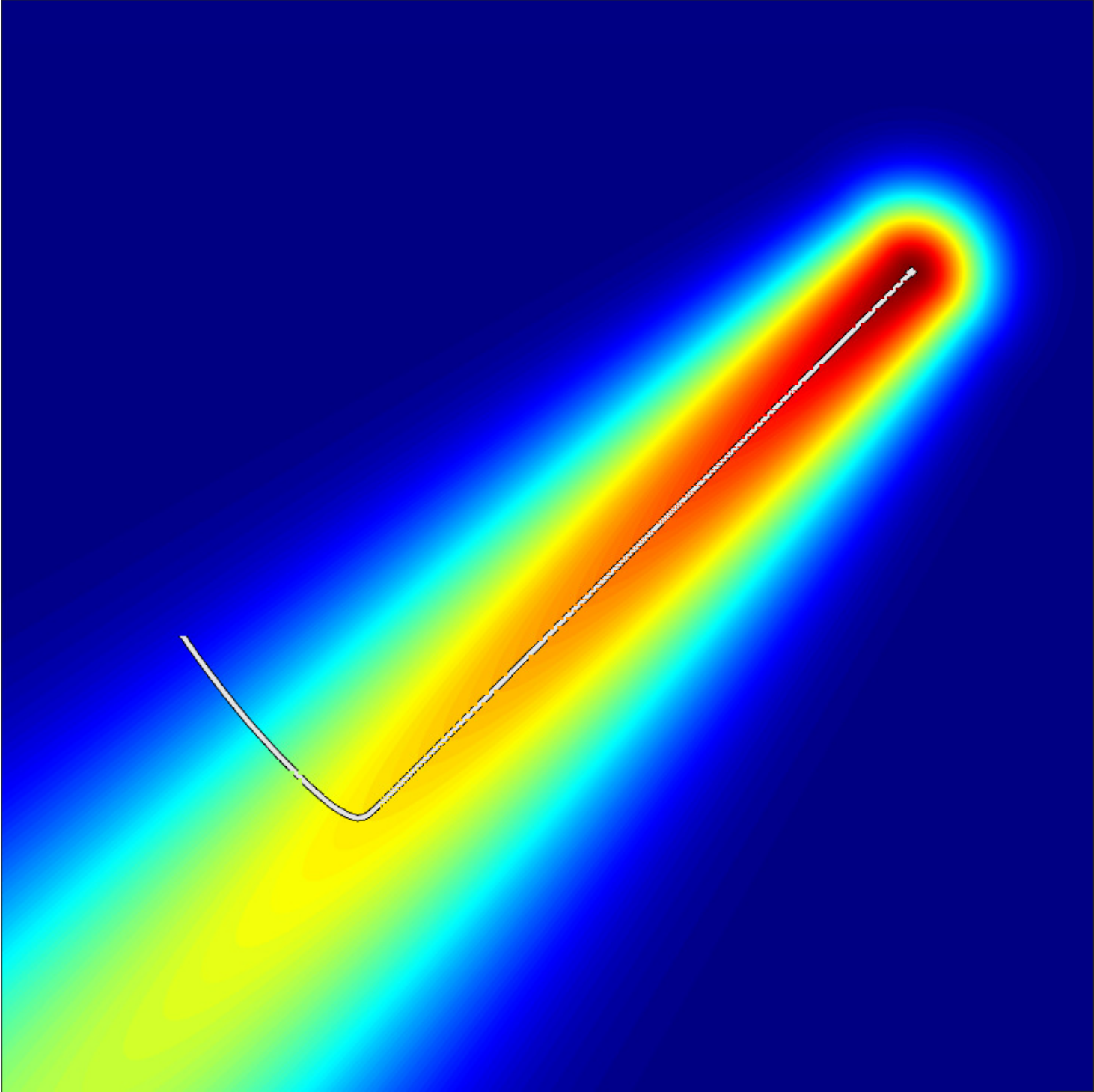}
\caption{Pollutant concentration and drone path. Red indicates high concentration while blue indicates low concentration. Pollutant is blown by the wind to form a plume. The \ac{esc} controller drives the drone into the plume and follows the plume of the source of the leak.}
\label{fig:drone-trajectory}
\end{figure}

Closed-loop simulation results are shown in Figures~\ref{fig:drone-trajectory} and~\ref{fig:drone-outputs}. Figure~\ref{fig:drone-trajectory} shows the pollutant concentration and the path of the drone. The drone starts outside of the pollutant plume and moves perpendicular to the wind-direction into the plume stream.  Once the drone enters the pollutant stream, is proceed against the wind direction to the source of the pollutant.

\begin{figure}[!ht]
\centering
\subfloat[Cost]{
\includegraphics[width = 0.48\columnwidth]{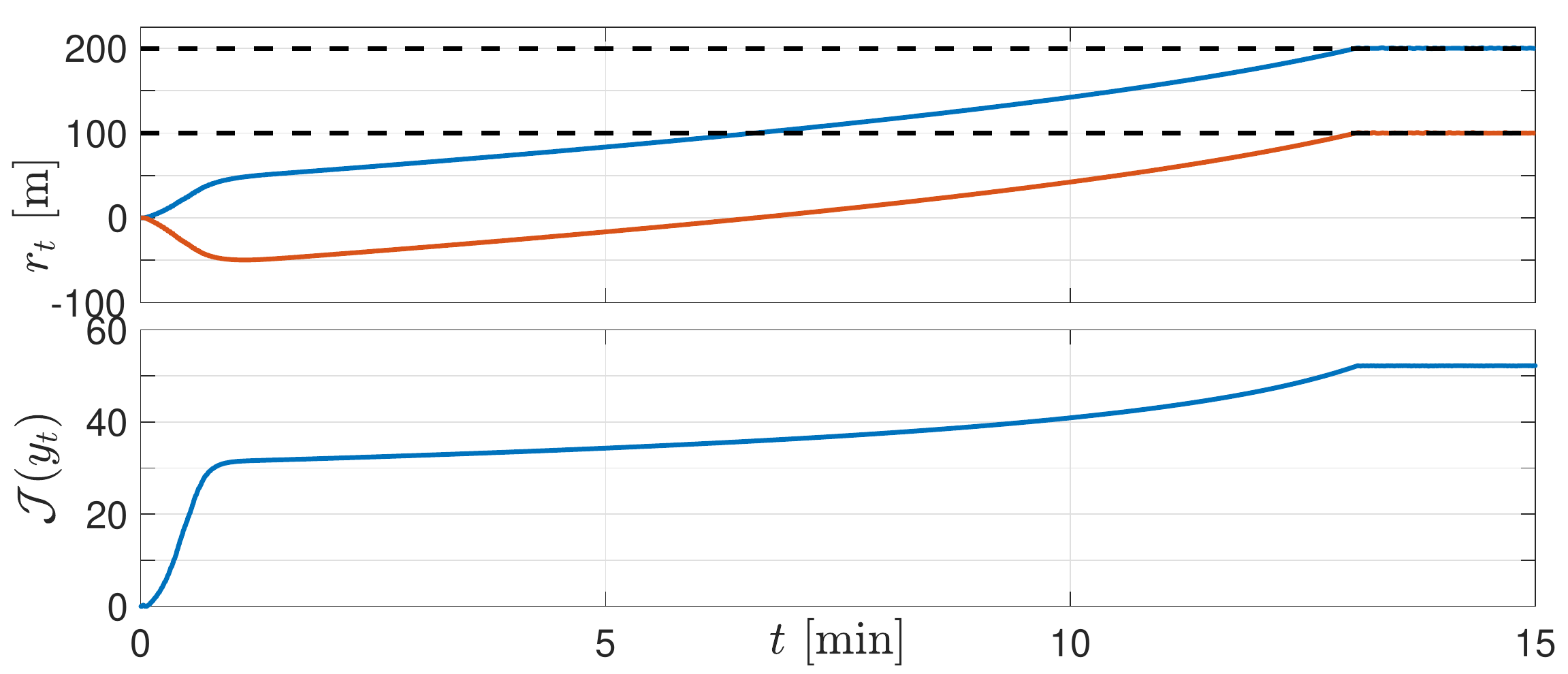}
\label{fig:drone-cost}
}
\subfloat[Step-Size]{
\includegraphics[width = 0.48\columnwidth]{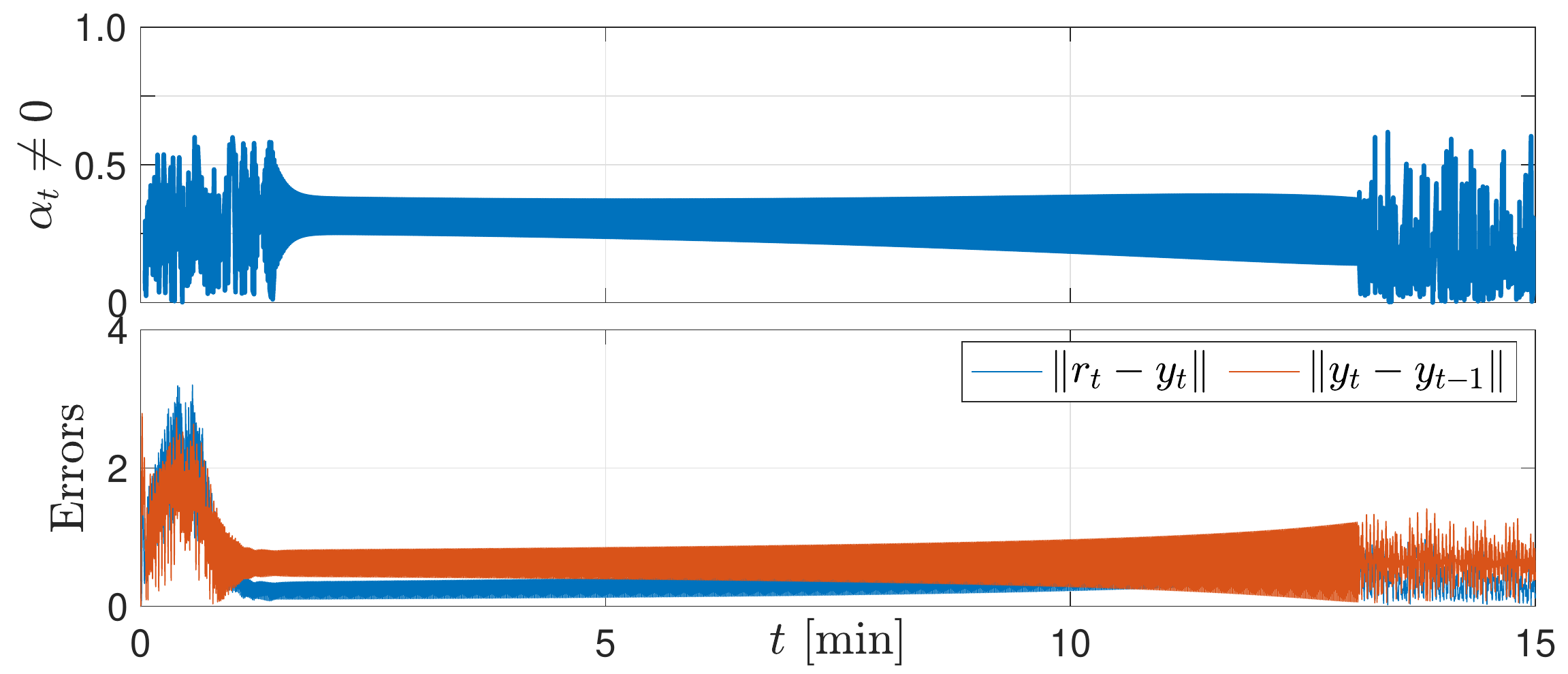}
\label{fig:drone-step}
}
\caption{(a) Reference location $r_t$ for the drone and the resulting pollutant concentration $\cost(y_t)$ over the $15$ minute simulation. (b) Step-size $\alpha_t$ used by the \ac{esc} controller and the norms of the tracking error $e_t = r_t - y_t$ and drone transients $\Delta y_t = y_t - y_{t-1}$. }
\label{fig:drone-outputs}
\end{figure}

Figure~\ref{fig:drone-cost} shows the location set-point $r_t$ and concentration $\cost(y_t)$ measured at $y_t$ as a function of time $t$. The drone converges $r_t \rightarrow r^\star$ to the location $r^\star = y^\star$ of the pollutant leak as shown by the dashed black lines in Figure~\ref{fig:drone-cost}. Likewise, the measured pollutant concentration $\cost(y_t)$ converges to the maximum. The step-size~\eqref{eq:step-size} is shown in Figure~\ref{fig:drone-step}. Since the step-size~\eqref{eq:step-size} is zero $\alpha_t = 0$ approximately $90\%$ of the time, we only plot it for the time-instance $t$ when it is non-zero $\alpha_t \neq 0$. Since the \ac{esc} controller runs at $20$ Hertz, the step-size is non-zero $\alpha_t \neq 0$ on average $12$ times per minute, meaning that the estimated location $r_t$ of the leak source is persistently and frequently updated.

\subsection{Benchmark Examples}

In this section, we compare our \ac{esc} controller with existing methods using three benchmark examples from the literature. 

\subsubsection{1-D Benchmark}

In this section, we demonstrate our \ac{esc} controller for the $1$ state benchmark example from~\cite{Hunnekens2014}. The plant dynamics are
\begin{subequations}
\label{eq:benchmark1-plant}
\begin{align}
\dot{x} &= - x + u \\
y &= x
\end{align}
\end{subequations}
The plant~\eqref{eq:benchmark1-plant} is a stable linear system and therefore satisfies Assumption~\ref{assume:plant}. 
The unknown cost function is
\begin{align}
\label{eq:benchmark1-cost}
\cost(y) = 3 - \frac{1}{\sqrt{1+(y-2)^2}}.
\end{align}
Although the cost~\eqref{eq:benchmark1-cost} is non-convex, it locally satisfies Assumption~\ref{assume:cost}. Our \ac{esc} controller~\eqref{eq:controller} used the gain $K=0.5$ and a sample rate of $10$ Hertz. The \ac{bls} estimator~\eqref{eq:estimator} had an estimation horizon of $N=5$ and bounds $\ubar{H} = -2$ and $\bar{H} = 2$ on the curvature $\nabla^2 \cost$ of the cost. The dither $d_t = 0.001 \sin(t)$ was used to provide persistency of excitation. 

\begin{figure}[!ht]
\centering
\subfloat[Cost]{
\includegraphics[width = 0.48\columnwidth]{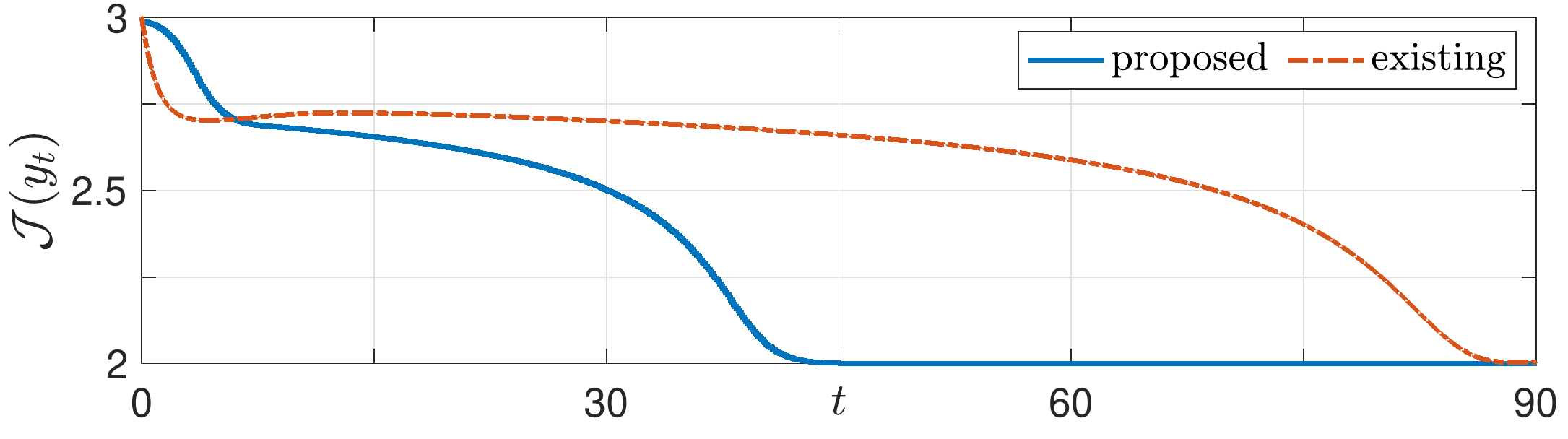}
\label{fig:benchmark1-cost}
}
\subfloat[Step-Size]{
\includegraphics[width = 0.48\columnwidth]{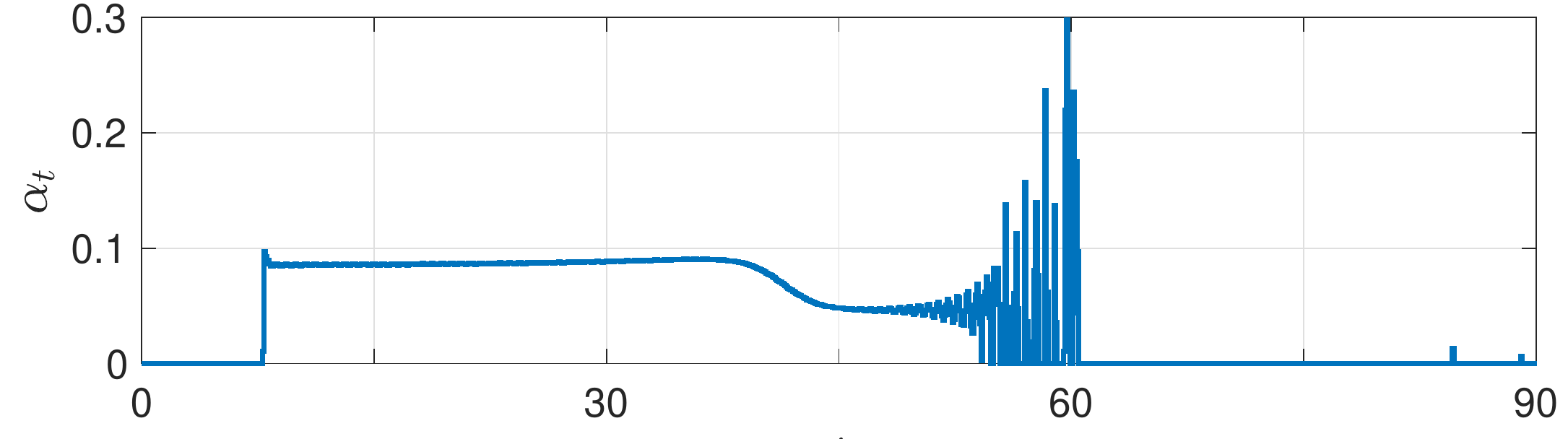}
\label{fig:benchmark1-step}
}
\caption{A comparison of the proposed \ac{esc} controller with an \ac{esc} controller from literature.}
\label{fig:benchmark1}
\end{figure}

Simulation results are shown in Figure~\ref{fig:benchmark1}. Figure~\ref{fig:benchmark1-cost} compares the cost $\cost(y_t)$ profiles of our \ac{esc} controller with the \ac{esc} controller from~\cite{Hunnekens2014}. This figure shows that our \ac{esc} controller converges to the optimal in roughly half the time as the existing controller.

\subsubsection{2-D Benchmark}

In this section, we demonstrate our \ac{esc} controller for the $2$ state benchmark example from~\cite{Suttner2019}. The plant dynamics are
\begin{subequations}
\label{eq:benchmark2-plant}
\begin{align}
\dot{x} &= R(x) u + w \\
y &= x 
\end{align}
\end{subequations}
where $R(x) \in \reals^{2 \times 2}$ is a planar rotation matrix with angle $x_1 + x_2$ and $w(t) = [\sin(2t),\cos(t)]$ is a periodic disturbance. This nonlinear plant~\eqref{eq:benchmark2-plant} does not satisfy Assumption~\ref{assume:plant} since it is not \ac{iss}. Indeed, it is only \textit{marginally stable} for $u=w=0$ and has no equilibrium states $\dot{x} = 0$ for $u \neq 0$ or $w \neq 0$. Thus, we pre-stabilize the system using the controller
\begin{align*}
u = - R(x)^\trans \big( F(x-r) - w \big).
\end{align*}
where the matrix $F = -10 I$ has Hurwitz eigenvalues so that the output will track $y \rightarrow r$ the reference $r = \bar{r}$. To make the problem more challenging and preserve the nonlinearity, we simulate the plant~\eqref{eq:benchmark2-plant}  in continuous-time with the controller updated in discrete-time, i.e., we apply a zero-order hold for the control input $u(t)=u(t_k)$ for $t \in [t_k,t_{k+1})$ which is computed for states $x(t_k)$ and disturbances $w(t_k)$ sampled as discrete-times $t_k$ where $\Delta t = 50$ milliseconds. 

The unknown cost function is
\begin{align}
\label{eq:benchmark2-cost}
\cost(y) = \big\| y - 1 \big \|^2 + 2018.
\end{align}
This strictly convex quadratic cost satisfies our Assumption~\ref{assume:cost}. 

The \ac{esc} controller~\eqref{eq:controller} used gain $K=0.5I$ and a sample-rate of $20$ Hertz. The \ac{bls} estimator~\eqref{eq:estimator} had an estimation horizon of $N=5$ and bounds $\ubar{H} = 0I$ and $\bar{H} = 10I$ on the curvature $\nabla^2 \cost$ of the cost. No dither was used since the periodic disturbance $w(t)$ already provide persistency of excitation. Between sample periods $\Delta t = 0.05$, the nonlinear plant~\eqref{eq:benchmark2-plant} was simulated using MATLAB's \texttt{ode45} solver. 

\begin{figure}[!ht]
\centering
\subfloat[Cost]{
\includegraphics[width = 0.49\columnwidth]{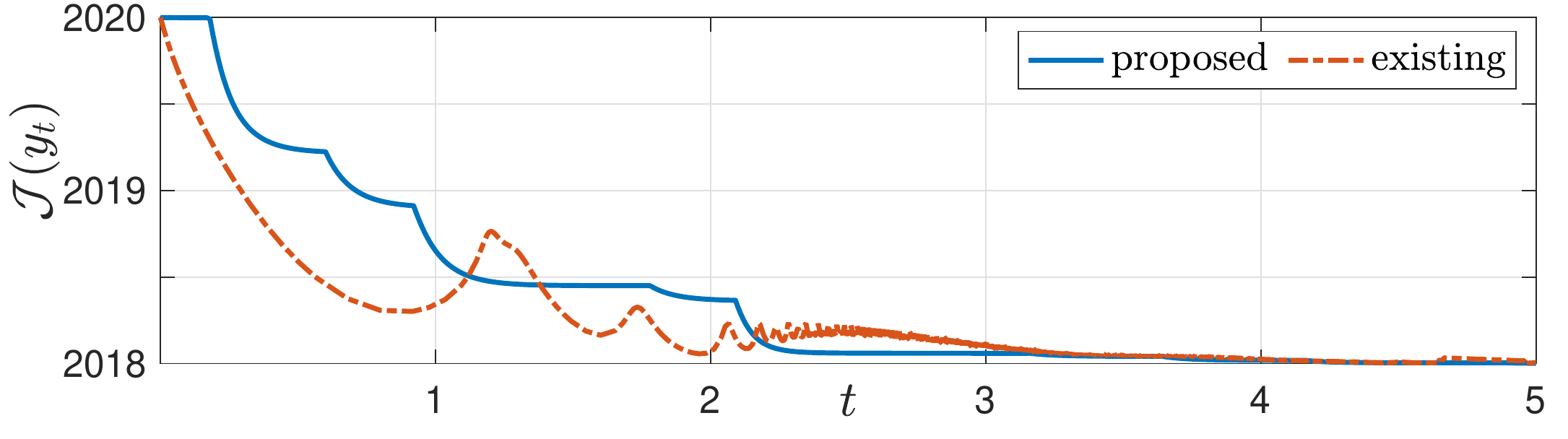}
\label{fig:benchmark2-cost}
}
\subfloat[Step-Size]{
\includegraphics[width = 0.49\columnwidth]{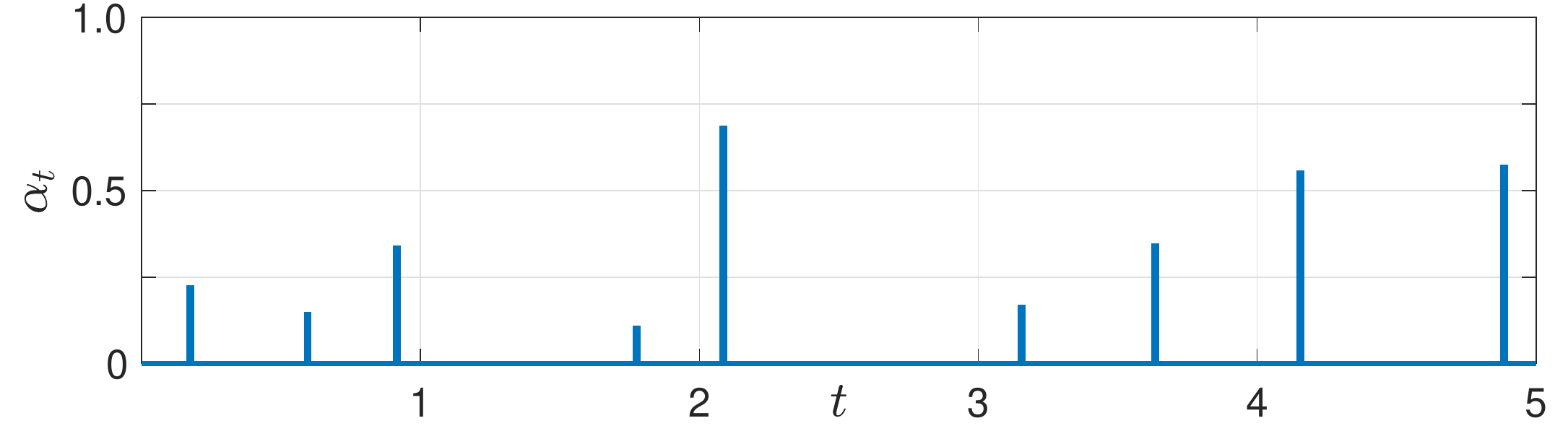}
\label{fig:benchmark2-step}
}
\caption{A comparison of the proposed \ac{esc} controller with an existing \ac{esc} controller from literature.}
\label{fig:benchmark2}
\end{figure}

Simulation results are shown in Figure~\ref{fig:benchmark2}. Figure~\ref{fig:benchmark2-cost} shows that our \ac{esc} controller has comparable performance to the existing controller from~\cite{Suttner2019}. This benchmark example demonstrates how the tracking errors $e_t = r_t - y_t$ affect our adaptive step-size~\eqref{eq:step-size}. Due to the rotation matrix in the dynamics~\eqref{eq:benchmark2-plant}, the plant takes looping paths between the reference set-points $r_t$. This produces highly exciting, but highly non-local $\| r_t - y_t \| \gg 0$ data $\{ \cost_t, y_t\}$, which leads to poor estimates of the gradient $\nabla \cost(r_t)$ at the set-point $r_t$. As a result, the step-size~\eqref{eq:step-size} is almost always zero $\alpha_t = 0$, allowing the plant~\eqref{eq:benchmark2-plant} to settle $y_t \approx r_t$ near the set-point $r_t$ before trusting the estimated gradient. Indeed, the step-size is non-zero $\alpha_t \neq 0$ at only $9$ of the $501$ simulated time instances, as shown in Figure~\ref{fig:benchmark2-step}. Nonetheless, our \ac{esc} controller converged to the optimal with a comparable convergence rate to the specialized \ac{esc} controller from~\cite{Suttner2019}.

\subsubsection{3-D Benchmark}

In this section, we demonstrate our \ac{esc} controller for the $3$ state benchmark example from~\cite{haring2016}. The plant dynamics are
\begin{subequations}
\label{eq:benchmark3-plant}
\begin{align}
\dot{x}_1 &= - x_1 + u_2^2\\
 \dot{x}_2 &= -x_2 + u_1 \\
  \dot{x}_3 &= -x_3 + u_2 x_2.
\end{align}
\end{subequations}
Although the plant~\eqref{eq:plant} does not satisfy our asymptotic tracking assumption, this can be rectified by inverting the steady-state map of the plant using the transformation
\begin{subequations}
\label{eq:benchmark3-gain}
\begin{align}
u_{1} &= r_{1} / (1+\sqrt{r_{2}}) \\ 
u_{2} &= \sqrt{r_{2}}.
\end{align}
\end{subequations}
The plant~\eqref{eq:benchmark3-plant} has an implicit constraint $r_2 \geq 0$, which we enforce by setting $r_2 = 0$ if $r_2 < 0$. This plant~\eqref{eq:benchmark3-plant} is only locally Lipschitz continuous. The unknown cost function is
\begin{align}
\label{eq:benchmark3-cost}
\cost(y) = (x_2 + x_3)^2 + 2(x_1 + x_2 - u_1) = y_1^2 + 2y_2
\end{align}
where $y_1 = x_2 + x_3$ and $y_2 =x_1 + x_2 - u_1$ are the measured outputs. Note that the cost~\eqref{eq:benchmark3-cost} is convex, but not strictly convex. Nonetheless, it satisfies Assumption~\ref{assume:cost}. 

For the \ac{esc} controller~\eqref{eq:controller} design, the plant~\eqref{eq:benchmark3-plant} was converted to discrete-time using the forward Euler method with a sample-time of $\Delta t = 0.25$. The gain $G$ and Lyapunov matrix $P$ were computed using parametric linear matrix inequalities~\cite{Boyd1994} with $u_2 \in [-5,5]$ as the parameter. The \ac{bls} estimator~\eqref{eq:estimator} had an estimation horizon of $N=5$ and bounds $\ubar{H} = 0I$ and $\bar{H} = 10I$ on the curvature $\nabla^2 \cost$ of the cost. The dither $d_t = 0.001[\sin(t), \sin(2t)]^\trans$ was used to provide persistency of excitation. 

\begin{figure}[!ht]
\centering
\subfloat[Cost]{
\includegraphics[width = 0.49\columnwidth]{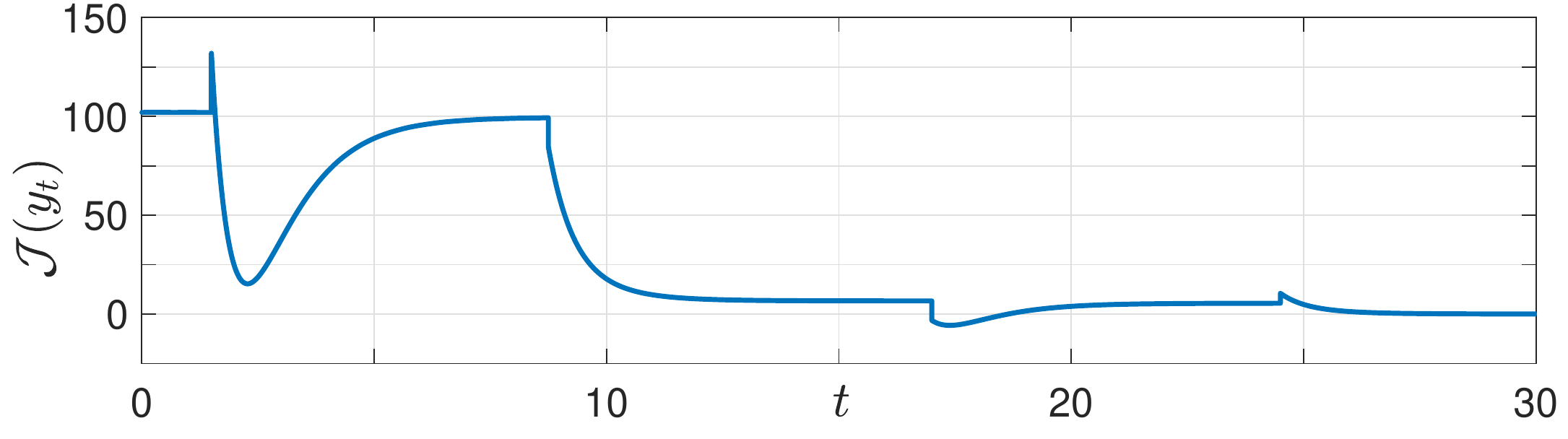}
\label{fig:benchmark3-cost}
}
\subfloat[Step-Size]{
\includegraphics[width = 0.49\columnwidth]{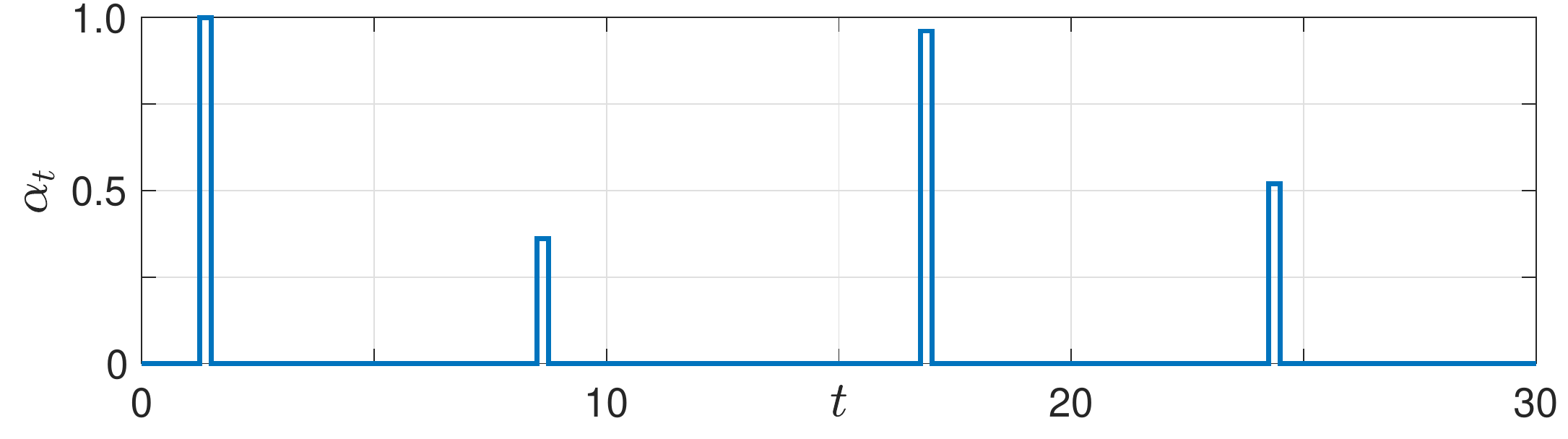}
\label{fig:benchmark3-step}
}
\caption{A comparison of the proposed \ac{esc} controller with an existing \ac{esc} controller from literature.}
\label{fig:benchmark3}
\end{figure}

Simulation results are shown in Figure~\ref{fig:benchmark3}. Between sample periods $\Delta t = 0.25$, the nonlinear plant~\eqref{eq:benchmark3-plant} was simulated using MATLAB's \texttt{ode45} solver. Figure~\ref{fig:benchmark3-cost} shows that our \ac{esc} controller converged to the optimal equilibrium in approximately $30$ seconds, which is approximate $40 \times$ faster than the existing controller. Simulation results for the existing controller are not shown due to the disparity in time-scales. Note that the cost $\cost(y_t)$ can temporarily drop below the optimal equilibrium cost since $y_2 = x_1 + x_2 - u_1 = x_1 - \dot{x}_2$ depends on the state velocity $\dot{x}_2$ which is zero $\dot{x}_2 = 0$ at equilibrium. 

Again, the step-size~\eqref{eq:step-size} is almost always zero $\alpha_t$ as shown in Figure~\ref{fig:benchmark2-step}. The step-size is non-zero $\alpha_t \neq 0$ for only $4$ of the simulated $120$ time instances. In contrast to the previous benchmark example, in this example the mostly zero step-size $\alpha_t =0$ is due to the state velocity $\dot{x}_2$ appearing in the cost. The zero step-size $\alpha_t =0$ allows the plant~\eqref{eq:benchmark3-plant} to settle near an equilibrium $y_t \approx y_{t-1}$ before exploiting the estimated gradient.

\section{Conclusions}

This paper presented an \ac{esc} controller~\eqref{eq:controller} with an adaptive step-size~\eqref{eq:step-size} that adjusts the aggressiveness of the controller based on the quality of the gradient estimate~\eqref{eq:estimator}. We proved that the \ac{bls} estimator~\eqref{eq:estimator} with our novel weighting~\eqref{eq:weighting} produced bounded~\eqref{eq:estimation-error} gradient estimation errors. The adaptive step-size~\eqref{eq:step-size} maximizes the decrease of the Lyapunov function~\eqref{eq:lyapunov} for the worst-case estimation error~\eqref{eq:estimation-error} in the exploitation mode. In the exploration mode, the controller allows the plant to settle improving the gradient estimate. Since the controller~\eqref{eq:controller} interminably re-enters the exploitation mode, we were able to prove that the optimal equilibrium~\eqref{eq:equilibrium} is \ac{iss} for the closed-loop system~\eqref{eq:plant} and~\eqref{eq:controller}.

\bibliographystyle{IEEEtran}
\bibliography{extremum-seeking.bib}

\end{document}